\theoremstyle{plain}
\newtheorem*{remark}{Remark}
\newtheorem{proposition}{Proposition}[section]
\newtheorem{example}{Example}
\newtheorem{definition}{Definition}[section]
\begin{document}

\title{\textbf{Weingarten surfaces with moving frames - a tribute to S.S. Chern and C.L. Terng - and a duality result}}
\author
{Magdalena Toda\\
Department of Mathematics and Statistics\\
Texas Tech University\\
Lubbock, TX  79409-1042}
\date{}
\maketitle
\numberwithin{equation}{section}
\thispagestyle{empty}
\begin{abstract}
The techniques used in this paper are based on the exterior calculus of Maurer-Cartan forms, and Weingarten surfaces are used to illustrate the methods that apply to quadratic exterior equations with constant coefficients. Isothermic {\it surfaces of constant astigmatism} (non-linear Weingarten surfaces whose difference of principal curvatures is a constant) are shown to represent dual surfaces of isothermic surfaces which satisfy the relation $H + \alpha K = 0$.
\end{abstract}

MSC 2010: 53A10

Keywords: Weingarten surface, linear Weingarten surface, Cartan's moving frame method, isothermic coordinates, dual surfaces.
\newpage
\section{Introduction}

Several decades ago, it was discovered that some important PDEs represent the Gauss-Codazzi equations of some types of surfaces studied in differential geometry. Transformations that generate new surfaces from given ones became a central research topic in geometry. Chern and his collaborators (cf. [4][6][7][8][9]) published numerous papers in the field of geometric transformations with moving frame methods. Most of these transformations satisfy a certain \emph{nonlinear superposition principle} in the theory of integrable systems.

In this work, we present cases of quadratic exterior equations of constant coefficients, especially the structure equations of linear Weingarten surfaces. We are also discussing new results related to the integrability of Weingarten surfaces. We prove that {\it isothermic surfaces of constant astigmatism} which are presented in [10] actually represent the dual surfaces (Christoffel transforms) of  isothermic linear Weingarten surfaces whose mean and Gauss curvatures satisfy the relation $H + \alpha K = 0$. \\

\section{Moving frames and Cartan's structure equations}
Cartan's method of moving frames has been fruitfully used in surface theory in $E^3$ over
the past few decades.
Cartan observed that the Gauss-Codazzi-Mainardi-Peterson equations are best derived
from the integrability conditions satisfied by the so called Maurer-Cartan forms of
the Euclidean motion
group.  A frame is a collection $\{x, e_1, e_2, e_3\}$ where $x$ is a point in $E^3$ and
$\{e_1, e_2, e_3\}$ a set of orthonormal vectors. The set of all frames represents a
6-dimensional manifold. One can write
\begin{equation}
\begin{cases}
dx = \omega_1 e_1 + \omega_2 e_2 + \omega_3 e_3,\\
d e_1 = \omega_{11} e_1 + \omega_{12} e_2 + \omega_{13} e_3,\\
de_2 = \omega_{21} e_1 + \omega_{22} e_2 + \omega_{23} e_3,\\
de_3 = \omega_{31} e_1 + \omega_{32} e_2 + \omega_{33} e_3.
\end{cases}
\end{equation}
The differential 1-forms $\omega_i$ and $\omega_{ij} = -\omega_{ij}$ defined on the
frame space $\mathcal{F}$ are infinitesimal components of frame displacement.  They
can also be viewed as Maurer-Cartan forms (right invariant 1-forms on the Euclidean
motion group).  Here we adopt the convention that the group transforms $E^3$ by right
multiplications.  Using $d^2 x = d^2 e_i =0$,
we obtain the integrability conditions
\begin{equation}
\begin{cases}
d\omega_i = \omega_j \wedge \omega_{ji},\\
d \omega_{ij} = \omega_{i\kappa} \wedge \omega_{\kappa j},
\end{cases}
\end{equation}
called {\it the structure equations} of the group $G$.  Here we use the
summation convention.  Since the matrix $\omega_{ij}$ is antisymmetric, we have 6
linearly independent 1-forms $\{\omega_1, \omega_2, \omega_3, \omega_{12},
\omega_{13}, \omega_{23}\}$ which form a basis of all the right invariant 1-forms on
$\mathcal{F}$.

For a surface $M$ in $E^3$ given by $x = x(u,v)$, all the frames $\{x, e_1, e_2,
e_3\}$ satisfying $x \in M$ are called the zeroth order frames of the surface $M$,
which form a 5-dimensional submanifold $\mathcal{F}_0 \subset \mathcal{F}$. Via
restriction to this submanifold, the three linear independent 1-forms $\omega_1,
\omega_2, \omega_3$ will satisfy a linear relation $a_1 \omega_1 + a_2 \omega_2 + a_3
\omega_3 =0$, which is essentially the equation of the tangent plane of $M$ at $x$
relative to the frame $\{x, e_1, e_2, e_3\}$.  The coefficients $a_1, a_2, a_3$ vary
from frame to frame. If the frame is such that $e_1, e_2$ span the tangent plane of
$M$ at $x$, the above linear relation takes the form
\begin{equation}
\omega_3 =0.
\end{equation}
All such frames are called the first order frames of $M$ which form a 3-dimensional
submanifold $\mathcal{F}_1 \subset \mathcal{F}_0$.  Restricting all Maurer-Cartan
forms to $\mathcal{F}_1$, the structure equations (2.3) hold true, and we have the
additional equation $\omega_3=0$ and its consequence $d \omega_3 =0$. Equation (2.3)
now becomes
\begin{equation}
\begin{cases}
d \omega_1 = - \omega_2 \wedge \omega_{12},\; d \omega_2 = \omega_1 \wedge \omega_{12} ,\\
d \omega_{12} = -\omega_{13} \wedge \omega_{23},\\
d \omega_{13} = \omega_{12} \wedge \omega_{23},\; d \omega_{23} = \omega_{13} \wedge \omega_{12},\\
\omega_3 =0, \;\omega_1 \wedge \omega_{13} +  \omega_2 \wedge \omega_{23} =0.
\end{cases}
\end{equation}
These are Cartan's structure equations for the first order frames of surfaces in
$E^3$.  By Cartan's lemma, $\omega_1 \wedge \omega_{13} + \omega_2 \wedge \omega_{23}
=0$ implies
\begin{equation}
\begin{cases}
\omega_{13} = h_{11} \omega_1 + h_{12} \omega_2,\\
\omega_{23} = h_{12} \omega_1 + h_{22} \omega_2.
\end{cases}
\end{equation}
The first and second fundamental forms of the surface $M$ are given by
\begin{equation}
\begin{cases}
I = dx \cdot dx = \omega^2_1 + \omega^2_2,\\
II = -dx \cdot d e_3 = \omega_1 \omega_{13} + \omega_2 \omega_{23} = h_{11} \omega^2_1
+ 2 h_{12} \omega_1 \omega_2 + h_{22} \omega^2_2.
\end{cases}
\end{equation}
The two form $\omega_1 \wedge \omega_2$ is the area element of the surface.  The
Gaussian curvature $K = h_{11} h_{22} - h^2_{12}$ and mean curvature $H = (h_{11} +
h_{22})/2$ are also given by
\begin{equation}
\begin{cases}
\omega_{13} \wedge \omega_{23} = K \omega_1 \wedge \omega_2,\\
\omega_1 \wedge \omega_{23} - \omega_2 \wedge \omega_{13} = 2 H \omega_1 \wedge
\omega_2.
\end{cases}
\end{equation}
It is remarkable that the first 5 equations of (2.5) are consequences of the last two
equations.  Therefore (2.5) is essentially a right invariant differential system on
$\mathcal{F}$ defined by $\omega_3 = \omega_1 \wedge \omega_{13} + \omega_2 \wedge
\omega_{23} =0$.

The following propositions were used by Cartan in [2], [3]; they represent direct consequences of the Frobenius theorem.

\begin{proposition}
If $M$ is a surface in $E^3$ given by $x = x (u,v)$, then the 3-dimensional space
$\mathcal{F}_1$ of $M$'s first order frames given by $F = F(u,v,\phi)$ is an integral
manifold of (2.5).  In particular, any first order frame field $F = F(u,v,\phi(u,v))$
is also an integral manifold.

Conversely, if $F=F(u,v)$ is an integral manifold of (2.5) satisfying $\omega_1 \wedge
\omega_2 \neq 0$, then the corresponding surface $M$ given by $x = x(u,v)$ is an
immersed surface of $E^3$.  Moreover $F=F(u,v)$ is a first order frame field of $M$.
\end{proposition}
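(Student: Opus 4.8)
The plan is to first pin down what ``integral manifold of (2.5)'' actually means, and then verify it in each direction by simply unwinding the definition of a first order frame. As the discussion immediately preceding the Proposition records, (2.5) is, viewed as an exterior differential system on the frame manifold $\mathcal{F}$, cut out by the two conditions $\omega_3 = 0$ and $\Theta := \omega_1\wedge\omega_{13} + \omega_2\wedge\omega_{23} = 0$; the remaining five relations in (2.5) are structure equations that hold identically on $\mathcal{F}$ and therefore pull back to any submanifold automatically. So for a submanifold $N \subset \mathcal{F}$, being an integral manifold of (2.5) amounts precisely to $\omega_3|_N = 0$ and $\Theta|_N = 0$. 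Moreover, since $d\omega_3 = \omega_1\wedge\omega_{13}+\omega_2\wedge\omega_{23}$ (the $i=3$ Maurer--Cartan equation, using $\omega_{33}=0$), once $\omega_3|_N = 0$ we automatically get $\Theta|_N = (d\omega_3)|_N = d(\omega_3|_N) = 0$; in particular the second condition is redundant as soon as the first holds.

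For the direct statement I start from a surface $M$, $x = x(u,v)$, and its bundle $\mathcal{F}_1 = \{F(u,v,\phi)\}$ of first order frames. Along $\mathcal{F}_1$ the vectors $e_1,e_2$ span $T_xM$ while $x\in M$ is independent of $\phi$, so $dx = \omega_1 e_1 + \omega_2 e_2 + \omega_3 e_3$ is everywhere tangent to $M$ and hence has no $e_3$-component: $\omega_3 = dx\cdot e_3 = 0$ on $\mathcal{F}_1$. Differentiating gives $\Theta = d\omega_3 = 0$ on $\mathcal{F}_1$ as well, so $\mathcal{F}_1$ is an integral manifold of (2.5). A first order frame field $F(u,v,\phi(u,v))$ is then just a $2$-dimensional submanifold of $\mathcal{F}_1$, to which $\omega_3$ and $\Theta$ restrict to zero a fortiori, so it too is an integral manifold.

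For the converse, suppose $F = F(u,v)$ is an integral manifold of (2.5) with $\omega_1\wedge\omega_2\neq 0$, and read off from it the maps $x(u,v)$ and the orthonormal triple $e_1(u,v),e_2(u,v),e_3(u,v)$ (orthonormal because $F$ takes values in $\mathcal{F}$). Since $\omega_3 = 0$ along $F$, the frame equations (2.1) give $dx = \omega_1 e_1 + \omega_2 e_2$, hence $x_u = \omega_1(\partial_u)e_1 + \omega_2(\partial_u)e_2$ and $x_v = \omega_1(\partial_v)e_1 + \omega_2(\partial_v)e_2$. As $e_1,e_2$ are orthonormal, $x_u$ and $x_v$ are linearly independent exactly when $\omega_1\wedge\omega_2\neq 0$, which is our hypothesis; thus $x = x(u,v)$ is an immersed surface $M \subset E^3$, and its tangent plane at $x$ is $\operatorname{span}(x_u,x_v) = \operatorname{span}(e_1,e_2)$. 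Therefore $e_3$ is a unit normal to $M$, which is exactly the requirement for $\{x,e_1,e_2,e_3\}$ to be a first order frame field of $M$. (Note that $\Theta = 0$ is never invoked here: on the $2$-dimensional $F$ it is forced by $\omega_3 = 0$, as in the first paragraph.)

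I expect the only real subtlety to be the bookkeeping of the first paragraph --- recognizing that ``integral manifold of (2.5)'' collapses to the single scalar equation $\omega_3 = 0$, with the $2$-form $\Theta$ and all the structure equations riding along for free --- together with the observation that the open condition $\omega_1\wedge\omega_2\neq 0$ is precisely what upgrades ``$x$ is a smooth map'' to ``$x$ is an immersion.'' Everything else is a direct unwinding of definitions. It is worth remarking, in line with the Proposition's attribution to Cartan and to the Frobenius theorem, that the genuinely two-way nature of this correspondence also rests on the existence of first order frame fields $\phi = \phi(u,v)$ through each point of $M$, i.e.\ local sections of the circle bundle $\mathcal{F}_1 \to M$; this is elementary here, but it is the prototype of the Frobenius-type integration arguments used later in the paper.
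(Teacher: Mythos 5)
Your proof is correct, and it is consistent with the paper, which offers no proof of its own here (the proposition is stated as a ``direct consequence of the Frobenius theorem'' used by Cartan); your argument is exactly the standard unwinding that the paper leaves implicit, and your reduction of (2.5) to the single condition $\omega_3=0$ matches the paper's own remark after (2.8) that the system is defined by $\omega_3 = \omega_1\wedge\omega_{13}+\omega_2\wedge\omega_{23}=0$. One small imprecision: the first five relations of (2.5) do not all ``hold identically on $\mathcal{F}$'' --- e.g.\ the full Maurer--Cartan identity gives $d\omega_1 = -\omega_2\wedge\omega_{12}-\omega_3\wedge\omega_{13}$, so $d\omega_1=-\omega_2\wedge\omega_{12}$ only holds modulo $\omega_3$; they do, however, all follow on any submanifold where $\omega_3$ pulls back to zero, which is all your argument needs, so the conclusion stands.
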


A second view of the structure equations (2.5) is summarized in the following
\begin{proposition}
If $M$ is a surface of $E^3$ given by $x = x(u,v)$, then along any first order frame
field $F=F(u,v)$, the Maurer-Cartan forms $\omega_i$ and $\omega_{ij}$ can be viewed
as forms on $M$ which still satisfy (2.5).  Conversely, given a set of 1-forms
$\omega_i = p_i (u,v)du+q_i (u,v)dv$ and $\omega_{ij}=p_{ij}(u,v)du+q_{ij}(u,v)dv$
satisfying (2.5) and $\omega_1 \wedge \omega_2 \neq 0$, one can reconstruct the
immersion $x=x(u,v)$ into $E^3$ uniquely upto Euclidean motion.
\end{proposition}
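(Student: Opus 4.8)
The direct assertion is a formal consequence of the naturality of pull-back, so I would dispose of it in one line. A first order frame field is, by definition, a smooth map $F=F(u,v)$ from the parameter domain $U$ of $M$ into $\mathcal{F}_1$, and the forms $\omega_i,\omega_{ij}$ ``viewed as forms on $M$'' are the pull-backs $F^{*}\omega_i$, $F^{*}\omega_{ij}$. Since $d$ and $\wedge$ commute with pull-back, and since (2.5) holds identically on $\mathcal{F}_1$ (which is precisely the content of the discussion preceding Proposition 2.1), the pulled-back forms satisfy (2.5) on $U$; they are automatically of the stated form $p\,du+q\,dv$ because $\dim U=2$.

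For the converse I would run the classical Frobenius argument, in the spirit of Cartan's [2],[3]. After passing, if necessary, to a simply connected neighbourhood (the conclusion being local, up to a Euclidean motion), I regard the unknowns as vector-valued functions $x,e_1,e_2,e_3\colon U\to E^3$ and consider the linear total differential system $dx=\omega_1e_1+\omega_2e_2$ (using $\omega_3=0$) together with $de_i=\sum_j\omega_{ij}e_j$. Being linear, this system is completely integrable as soon as the formal identities $d(dx)=0$ and $d(de_i)=0$ follow from the system itself: a short computation shows that $d(de_i)=0$ is a consequence of $d\omega_{ij}=\omega_{i\kappa}\wedge\omega_{\kappa j}$, while $d(dx)=0$ reduces — after substituting $de_i=\sum_j\omega_{ij}e_j$ and the relations for $d\omega_1,d\omega_2$ — exactly to the closing relation $\omega_1\wedge\omega_{13}+\omega_2\wedge\omega_{23}=0$, which is part of (2.5). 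Hence through each point of the total space lying over a base point $p_0\in U$ there passes a unique integral leaf, and prescribing $x(p_0)=x_0$ together with a positively oriented orthonormal initial frame $(e_1(p_0),e_2(p_0),e_3(p_0))$ determines a unique solution $(x,e_1,e_2,e_3)$ on $U$.

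Three verifications then finish the argument. First, the frame stays orthonormal: from $de_i=\sum_j\omega_{ij}e_j$ one gets $d(e_i\cdot e_j)=\sum_k(\omega_{ik}\,e_k\cdot e_j+\omega_{jk}\,e_i\cdot e_k)$, and the antisymmetry $\omega_{ij}=-\omega_{ji}$ forces the right-hand side to vanish along the solution that begins with $e_i\cdot e_j=\delta_{ij}$; by uniqueness for linear systems, $e_i\cdot e_j\equiv\delta_{ij}$, so $F=\{x,e_1,e_2,e_3\}$ is a genuine Euclidean frame field. Second, $dx=\omega_1e_1+\omega_2e_2$ with $\omega_1\wedge\omega_2\neq0$ shows that $dx$ has rank $2$, so $x\colon U\to E^3$ is an immersion, and $\omega_3=0$ together with $e_3\perp\mathrm{span}(e_1,e_2)$ says $F$ is a first order frame field of $M=x(U)$. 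Third, the only freedom exercised in the construction was the choice of $x_0\in E^3$ and of the initial orthonormal frame, i.e.\ the choice of an element of the Euclidean motion group; hence any two solutions differ by a fixed Euclidean motion, which is the asserted uniqueness.

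The single nontrivial step is the integrability computation $d(dx)=d(de_i)=0$; all the rest is bookkeeping. A more conceptual route, which I would at least indicate, is to assemble $\{\omega_i,\omega_{ij}\}$ into one $1$-form $\Omega$ on $U$ taking values in the Lie algebra of $E(3)$: then (2.5) is precisely the Maurer--Cartan equation $d\Omega+\tfrac12[\Omega,\Omega]=0$, and Cartan's theorem on the Darboux derivative yields a map $F\colon U\to E(3)$ with $F^{*}(\text{Maurer--Cartan form})=\Omega$, unique up to left translation by a fixed element of $E(3)$ — that is, up to a Euclidean motion — whose first component is the desired immersion $x=x(u,v)$.
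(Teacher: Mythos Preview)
Your proof is correct and is precisely the Frobenius/Maurer--Cartan argument the paper has in mind: the paper does not actually supply a proof of this proposition but merely remarks that Propositions 2.1 and 2.2 ``were used by Cartan in [2], [3]; they represent direct consequences of the Frobenius theorem.'' Your write-up is therefore a faithful elaboration of the paper's one-line justification, and both the explicit linear-system integrability check and the alternative Darboux-derivative formulation you offer are standard ways of unpacking that appeal to Frobenius.
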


The previously-mentioned results have a series of very important and well-known consequences.
Surfaces of constant Gaussian curvature $K$ may be viewed as integral manifolds of the constant coefficient
differential system
\begin{equation}
\begin{cases}
d \omega_1 = \omega_{12} \wedge \omega_2, \;d \omega_2 = \omega_1 \wedge \omega_{12},\\
d \omega_{12} = -\omega_{13} \wedge \omega_{23},\\
d \omega_{13} = \omega_{12} \wedge \omega_{23},\; d \omega_{23} = \omega_{13} \wedge \omega_{12},\\
\omega_3 =0,\; \omega_1 \wedge \omega_{13} + \omega_2 \wedge \omega_{23} =0,\\
\omega_{13} \wedge \omega_{23} = K \omega_1 \wedge \omega_2,\; \omega_1 \wedge
\omega_2 \neq 0,
\end{cases}
\end{equation}
on $\mathcal{F}$.  The additional condition $\omega_1 \wedge \omega_2 \neq 0$ means we
require the surface to be immersed in $E^3$.  When $K < 0$, the system (2.9) is equivalent
to the sine-Gordon equation
\begin{equation}
\psi_{uv} = \sin \psi.
\end{equation}
Here $(u,v)$ is a special asymptotic coordinate system on the surface, $\psi = \psi
(u,v)$ the angle formed by the two asymptotic lines at $(u,v)$.

Surfaces of constant mean curvature $H$ are obtained by replacing the last equation of (2.9) with $\omega_1 \wedge \omega_{23} - \omega_2
\wedge \omega_{13} = 2H \omega_1 \wedge \omega_2$, where the mean curvature $H$ is a constant.  This equation is equivalent to the
sinh-Gordon equation.
\begin{equation}
\psi_{xx} + \psi_{yy} + \sinh \psi =0.
\end{equation}
Sine-Gordon equation and sinh-Gordon equation are well known integrable systems.  We
will study them mainly in the form of the above constant coefficient exterior
equations.

Remark that surfaces of constant Gaussian curvature and surfaces of constant mean curvature are two very important examples of linear Weingarten surfaces, and therefore well worth mentioning here.

\section{Some examples of geometric transformations}

I owe this paragraph to the dedication of X. Mo in presenting geometric transformations
in a general survey format. This section represent a tribute to the works [4]-[9] of
Chern, which served as a main reference.

First, let us recall the classical transformation between pseudospherical surfaces
found by Bianchi and B\"{a}cklund.

\begin{proposition}
(B\"{a}cklund theorem) Let $M$ and $M^\ast$ be two surfaces in $E^3$ with a one to one correspondence between $P\in M$ and $P^\ast \in M^\ast$ such that:\\
\indent (i) $PP^\ast$ is tangent to $M$ and $M^\ast$ at $P$ and $P^\ast$ respectively,
(ii) the distance $r = ||PP^\ast||$ and the angle $\tau$ between the normal directions
of $M$ and $M^\ast$ at $P$ and $P^\ast$ are constants.

Then both $M$ and $M^\ast$ have constant negative Gaussian curvature $K = -\sin^2
\tau/r^2$.
\end{proposition}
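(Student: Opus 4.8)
The plan is to run the classical B\"acklund argument in the language of the structure equations of Section 2. First I would build the adapted \emph{B\"acklund frame}. At $P \in M$ let $e_1$ be the unit vector from $P$ toward $P^\ast$; by hypothesis (i) it is tangent to $M$ at $P$, so together with the unit normal $e_3$ of $M$ and $e_2 := e_3 \times e_1$ it forms a first order frame field $\{x, e_1, e_2, e_3\}$ of $M$. By Proposition 2.2 the associated Maurer--Cartan forms $\omega_i, \omega_{ij}$ satisfy (2.5); in particular $\omega_3 = 0$, $\omega_{11} = \omega_{22} = \omega_{33} = 0$, and $\omega_1 \wedge \omega_{13} + \omega_2 \wedge \omega_{23} = 0$. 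Since $r = \|PP^\ast\|$ is constant, $x^\ast = x + r\,e_1$. Because $PP^\ast$ is tangent to $M^\ast$ at $P^\ast$ as well (hypothesis (i)), the vector $e_1$ is tangent to $M^\ast$, so the unit normal $e_3^\ast$ of $M^\ast$ lies in the plane spanned by $e_2$ and $e_3$; by hypothesis (ii) it makes the constant angle $\tau$ with $e_3$, so after fixing signs one may write, locally,
\[
e_1^\ast = e_1,\qquad e_2^\ast = \cos\tau\, e_2 + \sin\tau\, e_3,\qquad e_3^\ast = -\sin\tau\, e_2 + \cos\tau\, e_3 .
\]
I would then verify that $\{x^\ast, e_1^\ast, e_2^\ast, e_3^\ast\}$ is a first order frame field of $M^\ast$, so that its structure forms $\omega_i^\ast, \omega_{ij}^\ast$ also satisfy (2.5). (The degenerate case $\sin\tau = 0$, where $M^\ast$ is merely a parallel surface of $M$, is tacitly excluded by the conclusion that $K$ is negative.)

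The second step is to extract the \emph{B\"acklund relation}. Differentiating $x^\ast = x + r e_1$ by means of (2.1), and using $\omega_3 = 0$ and the constancy of $r$, gives $dx^\ast = \omega_1 e_1 + (\omega_2 + r\omega_{12})\, e_2 + r\omega_{13}\, e_3$. Rewriting this in the starred frame and imposing tangency, i.e. $dx^\ast \cdot e_3^\ast = 0$, produces the single scalar relation among $1$-forms
\[
\sin\tau\,(\omega_2 + r\omega_{12}) = r\cos\tau\; \omega_{13} .
\]
Differentiating $e_1^\ast, e_2^\ast, e_3^\ast$ (here the constancy of $\tau$ is used) and re-expressing them in the starred frame, then substituting the B\"acklund relation, collapses all the starred forms to
\[
\omega_1^\ast = \omega_1,\qquad \omega_2^\ast = \frac{r}{\sin\tau}\,\omega_{13},\qquad \omega_{13}^\ast = \frac{\sin\tau}{r}\,\omega_2,\qquad \omega_{23}^\ast = \omega_{23} .
\]

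The last step reads the curvatures straight off the structure equations. Substituting the four identities above into $\omega_1^\ast \wedge \omega_{13}^\ast + \omega_2^\ast \wedge \omega_{23}^\ast = 0$ (the $M^\ast$-instance of (2.5)) and recalling that $K$ is defined by $\omega_{13} \wedge \omega_{23} = K\,\omega_1 \wedge \omega_2$, with $\omega_1 \wedge \omega_2 \neq 0$, forces $K = -\sin^2\tau/r^2$ on $M$. Substituting them into $\omega_{13}^\ast \wedge \omega_{23}^\ast = K^\ast\,\omega_1^\ast \wedge \omega_2^\ast$ and using $\omega_1 \wedge \omega_{13} = -\,\omega_2 \wedge \omega_{23}$ (again from (2.5)) forces $K^\ast = -\sin^2\tau/r^2$ on $M^\ast$; since $r$ and $\tau$ are constants, both curvatures are constant and negative. (Equivalently, $K = K^\ast$ is immediate from the symmetry of the configuration under $M \leftrightarrow M^\ast$, $e_1 \mapsto -e_1$.) The main obstacle is not any single computation but the frame bookkeeping at the start: one must check carefully that hypothesis (i) forces $e_3^\ast \perp e_1$ and that hypothesis (ii) makes the tilt between the two frames the \emph{constant} $\tau$, so that $\{x^\ast, e_i^\ast\}$ really is a first order frame field of $M^\ast$ and the invariant system (2.5) genuinely applies to the starred forms; once that is in place, the single B\"acklund relation does all the work.
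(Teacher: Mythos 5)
Your proof is correct, and it reaches the same key intermediate step as the paper: choosing the adapted frames $x^\ast = x + re_1$, $e_1^\ast = e_1$, $e_2^\ast = \cos\tau\,e_2+\sin\tau\,e_3$, $e_3^\ast=-\sin\tau\,e_2+\cos\tau\,e_3$, and extracting the B\"acklund relation $\sin\tau(\omega_2+r\omega_{12})=r\cos\tau\,\omega_{13}$ from $\omega_3^\ast = dx^\ast\cdot e_3^\ast = 0$ (the paper's equations (3.2)--(3.3)). Where you diverge is in how the curvature is then forced out. The paper stays on the full three-dimensional frame space $\mathcal{F}_1$ of $M$, treats $\alpha=\omega_2+r\omega_{12}-r\cot\tau\,\omega_{13}$ as a $1$-form there, and computes $d\alpha\equiv\beta\pmod{\alpha}$ with $\beta=-[\omega_1\wedge\omega_2+(r^2/\sin^2\tau)\,\omega_{13}\wedge\omega_{23}]/r$; restricting to the frame field where $\alpha=0$ kills $d\alpha$ and hence $\beta$, giving $K=-\sin^2\tau/r^2$. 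You instead substitute the B\"acklund relation back into all the starred Maurer--Cartan forms, obtaining $\omega_1^\ast=\omega_1$, $\omega_2^\ast=(r/\sin\tau)\omega_{13}$, $\omega_{13}^\ast=(\sin\tau/r)\omega_2$, $\omega_{23}^\ast=\omega_{23}$ (all of which I have checked), and then read $K$ off the Cartan-lemma identity $\omega_1^\ast\wedge\omega_{13}^\ast+\omega_2^\ast\wedge\omega_{23}^\ast=0$ for $M^\ast$. The two computations are equivalent in content --- your identity is precisely $d\omega_3^\ast=0$ unpacked --- but the trade-off is real: your version yields the explicit transformed forms and hence $K^\ast=-\sin^2\tau/r^2$ directly (and is the natural starting point for the analytic form (3.6)), whereas the paper's $\beta$, computed mod $\alpha$ on all of $\mathcal{F}_1$ rather than along one frame field, is exactly the integrability obstruction needed for the Frobenius argument proving Proposition 3.2, which your route does not set up. Two minor points to tighten: your claim that $K^\ast$ is well defined by the ratio requires $\omega_1\wedge\omega_{13}\neq 0$, i.e.\ that $PP^\ast$ is not an asymptotic/principal-type degenerate direction (this is the same nondegeneracy the paper imposes in Proposition 3.2), and the exclusion of $\sin\tau=0$ should be stated as a hypothesis rather than inferred from the conclusion.
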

\begin{proposition}
(B\"{a}cklund integrability theorem) Consider the constants $r>0$ and $\tau$ and assume $M$
is a surface of constant negative Gaussian curvature $-\sin^2 \tau/r^2$.  Then for any
point $P_0 \in M$ and $P^\ast_0$ in space such that $P_0 P^\ast_0$ is tangent to $M$
at $P_0$ but not in the principle direction, there exists a unique surface $M^\ast$
tangent to $P_0 P^\ast_0$ at $P^\ast_0$, which also satisfies the other conditions of
proposition 3.1.
\end{proposition}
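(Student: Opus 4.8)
\medskip
\noindent\emph{Proof strategy.}\quad The plan is to recast the B\"{a}cklund transformation as a completely integrable Pfaffian equation on the first-order frame bundle of $M$, so that the existence and uniqueness of $M^\ast$ reduce to the Frobenius theorem, in the spirit of Propositions 2.1 and 2.2. First I would fix a first-order frame field $F=\{x,e_1,e_2,e_3\}$ on $M$ (so that $\omega_3=0$ and the structure equations (2.5) hold) and write the sought correspondence as
\[
x^\ast = x + r\,\xi,\qquad \xi = \cos\sigma\; e_1 + \sin\sigma\; e_2,
\]
where $\sigma=\sigma(u,v)$ is a scalar function to be determined; geometrically $\sigma$ is the angle that the segment $PP^\ast$ --- forced by hypothesis (i) to lie in $T_PM$ --- makes with $e_1$. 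Put $\eta = -\sin\sigma\; e_1 + \cos\sigma\; e_2$, so that $\{\xi,\eta,e_3\}$ is again an orthonormal frame. Since $\xi=(x^\ast-x)/r$ must be tangent to $M^\ast$ at $P^\ast$, and the unit normal of $M^\ast$ there must make the fixed angle $\tau$ with $e_3$, that normal is pinned down: $e_3^\ast=\cos\tau\; e_3 + \sin\tau\; \eta$, an explicit function of $\sigma$ and of the chosen frame on $M$. All of the hypotheses (i)--(ii) of Proposition 3.1 are thereby encoded, save one.

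The one condition left is that $e_3^\ast$ actually be orthogonal to $M^\ast=\{x^\ast(u,v)\}$, i.e.\ that the $1$-form $dx^\ast\cdot e_3^\ast$ vanish identically. Differentiating $x^\ast=x+r\xi$ by means of the structure equations (2.5) and pairing with $e_3^\ast$ gives, after simplification, the Pfaffian equation
\[
d\sigma \;=\; \theta,\qquad \theta = \tfrac1r\bigl(\sin\sigma\;\omega_1 - \cos\sigma\;\omega_2\bigr) - \omega_{12} - \cot\tau\,\bigl(\cos\sigma\;\omega_{13}+\sin\sigma\;\omega_{23}\bigr),
\]
an explicit $1$-form linear in $\omega_1,\omega_2,\omega_{12},\omega_{13},\omega_{23}$ with coefficients trigonometric in $\sigma$ and $\tau$ (note $\sin\tau\neq0$, since $K=-\sin^2\tau/r^2\neq 0$). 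Passing to asymptotic coordinates, in which $M$ is governed by the sine-Gordon equation (2.10) for the angle $\psi$ between asymptotic lines (normalized so that the Gaussian curvature equals the prescribed value $-\sin^2\tau/r^2$), the relation $d\sigma=\theta$ becomes exactly the classical first-order B\"{a}cklund system connecting $\psi$ with the new angle $\psi^\ast$, the transformation parameter being fixed by $\tau$ and $r$; here $\sigma$ and $\psi^\ast$ are related invertibly.

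The heart of the matter is to verify that $d\sigma-\theta=0$ is \emph{completely integrable}, i.e.\ that $d\theta\equiv0$ modulo $d\sigma-\theta$. I would compute $d\theta$ and reduce it using the structure equations (2.5) \emph{together with} the constant-curvature relation $\omega_{13}\wedge\omega_{23}=K\,\omega_1\wedge\omega_2$, $K=-\sin^2\tau/r^2$; the obstruction term then cancels identically. In coordinates this is precisely the statement that the compatibility $\partial_v(\psi^\ast_u)=\partial_u(\psi^\ast_v)$ of the B\"{a}cklund system is equivalent to $\psi$ solving the sine-Gordon equation. Granting this, the Frobenius theorem produces, through any prescribed initial value $\sigma(P_0)=\sigma_0$, a unique solution $\sigma$ on a neighbourhood of $P_0$; prescribing $\sigma_0$ is the same as prescribing the direction of $P_0P_0^\ast$, and the hypothesis that this direction is not a principal direction is what keeps the construction non-degenerate --- it prevents $\psi^\ast\equiv\pm\psi$, so that $x^\ast=x^\ast(u,v)$ is a genuine immersion ($\omega_1^\ast\wedge\omega_2^\ast\neq0$) rather than collapsing onto a curve or onto $M$ itself. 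One then completes $\{x^\ast,e_3^\ast\}$ to a first-order frame field $\{x^\ast,e_1^\ast,e_2^\ast,e_3^\ast\}$ of $M^\ast$ and, invoking Proposition 2.1 (and 2.2), obtains $M^\ast$ as an immersed surface; by construction $PP^\ast$ is tangent to both $M$ and $M^\ast$, the distance $||PP^\ast||$ stays equal to $r$, and the normal angle stays equal to $\tau$, so Proposition 3.1 applies and, in particular, reconfirms that $M^\ast$ has Gaussian curvature $-\sin^2\tau/r^2$. Uniqueness of $M^\ast$ tangent to $P_0P_0^\ast$ at $P_0^\ast$ follows from the uniqueness in the Frobenius step together with the rigidity (uniqueness up to a Euclidean motion) in Proposition 2.2.

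The step I expect to be the main obstacle is this integrability computation --- organizing $d\theta$ so that the many trigonometric terms in $\sigma$ and $\tau$ recombine, via (2.5) and the curvature normalization, into a multiple of $d\sigma-\theta$. A secondary technical point is the careful treatment of the ``not a principal direction'' hypothesis, needed both to start the integration at a regular point and to guarantee that $M^\ast$ is a genuine immersed surface, together with the smooth choice of $e_1^\ast, e_2^\ast$ making $\{x^\ast,e_1^\ast,e_2^\ast,e_3^\ast\}$ an honest first-order frame field so that Proposition 2.2 applies.
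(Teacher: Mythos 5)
Your strategy is essentially the paper's own proof: the paper defines $\alpha = \omega_2 + r\,\omega_{12} - r\cot\tau\,\omega_{13}$ on the three-dimensional first-order frame bundle $\mathcal{F}_1$ of $M$, shows $d\alpha \equiv \beta \pmod{\alpha}$ with $\beta = -[\omega_1\wedge\omega_2 + (r^2/\sin^2\tau)\,\omega_{13}\wedge\omega_{23}]/r$, observes that $\beta=0$ is exactly the condition $K=-\sin^2\tau/r^2$, and concludes by the Frobenius theorem --- and your Pfaffian equation $d\sigma=\theta$ is precisely the gauge-fixed (graph-of-$\sigma$) form of $\alpha=0$ on $\mathcal{F}_1$, with the initial angle $\sigma(P_0)$ playing the role of the initial frame. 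The integrability computation you flag as the main obstacle is the one the paper records as equations (3.4)--(3.5), so it closes up exactly as you anticipate.
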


\begin{proof}
Chern and Terng's elegant derivation of these classical results in [9] is recalled below.
The condition of proposition 3.1 means that we can choose first order frames fields
$F=\{x, e_1, e_2, e_3\}$ for $M$ and $F^\ast = \{x, e^\ast_1, e^\ast_2, e^\ast_3\}$
for $M^\ast$ such that
\begin{equation}
\begin{cases}
x^\ast = x + r e_1,\\
e^\ast_1 = e_1,\\
e^\ast_2 = \cos \tau e_2 + \sin \tau e_3,\\
e^\ast_3 = -\sin \tau e_2 + \cos \tau e_3,
\end{cases}
\end{equation}
holds at all pairs of corresponding points.  This implies
\begin{equation}
\omega^\ast_3 = dx^\ast \cdot e^\ast_3 = -\sin \tau \omega_2 - r \sin \tau \omega_{12}
+ r \cos \tau \omega_{13}.
\end{equation}
From the structure equation of $M^\ast$ we have $\omega^\ast_3 =0$.  Therefore
\begin{equation}
\omega_2 + r \omega_{12} - r \cot \tau \omega_{13} =0
\end{equation}
holds along the frames fields $F$.  Consider $\alpha = \omega_2 + r \omega_{12} - r
\cot \tau \omega_{13}$ as a form on the space of all first order frames of $M$ (not
restricted to the frame fields chosen above).  Using the structure equation (2.5) of
$M$ we can verify that
\begin{equation}
d \alpha = \beta\, \pmod{\alpha}
\end{equation}
where
\begin{equation}
\beta = -[\omega_1 \wedge \omega_2 + (r^2 /\sin^2 \tau) \omega_{13} \wedge
\omega_{23}]/r.
\end{equation}
Restricting to the frame field $F$, we have $\alpha=0$ and therefore $d \alpha =0$
which gives $\beta=0$.  By $\omega_{13} \wedge \omega_{23} = K \omega_1 \wedge
\omega_2$, we obtain $K = -\sin^2 \tau/r^2$.  This proves the proposition 3.1.  The
condition of proposition 3.2 is $K = -\sin^2 \tau/r^2$, namely $\beta=0$.  The
conclusion follows from (3.3) and Frobenius theorem.

\end{proof}

$M^\ast$ is called the B\"{a}cklund transformation of $M$.  In terms of the function
$\psi = \psi (u,v)$ which satisfies the sine-Gordon equation (2.10), B\"{a}cklund
transformation given above takes the analytic form
\begin{equation}
\begin{cases}
(\frac{\psi + \psi^\ast}{2})_u = \frac{1}{\lambda} \sin \frac{\psi - \psi^\ast}{2},\\
(\frac{\psi - \psi^\ast}{2})_v = \lambda \sin \frac{\psi + \psi^\ast}{2}.
\end{cases}
\end{equation}

One of the most important property of B\"{a}cklund transformation is the following
\begin{proposition}
(Bianchi permutability theorem) If a surface $M_0$ has two transformations $M_1$ and
$M_2$ with parameters $(r_1, \tau_1)$ and $(r_2, \tau_2)$ respectively, then there can
be found a fourth surface $M_3$ which is a transformation of both $M_1$ and $M_2$ with
parameters $(r_2, \tau_2)$ and $(r_1, \tau_1)$ respectively.
\end{proposition}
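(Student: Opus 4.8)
The plan is to realize Bianchi's classical geometric construction inside the moving-frame framework used above, leaving a single substantial step -- a compatibility verification -- which I will carry out either with the structure equations (2.5) or, equivalently, through the sine--Gordon superposition formula.

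First I would record the standing consequences of Proposition 3.1. Since $M_0$ admits both a $(r_1,\tau_1)$- and a $(r_2,\tau_2)$-B\"{a}cklund transform, its Gaussian curvature must equal $-\sin^2\tau_1/r_1^2$ and $-\sin^2\tau_2/r_2^2$ simultaneously; hence $\sin^2\tau_1/r_1^2 = \sin^2\tau_2/r_2^2 =: \kappa$, and $M_1$ and $M_2$ share with $M_0$ the same constant Gaussian curvature $-\kappa$. Consequently the curvature compatibility of Proposition 3.1 holds both for a $(r_2,\tau_2)$-transform of $M_1$ and for a $(r_1,\tau_1)$-transform of $M_2$; by Proposition 3.2 each of these exists and is uniquely determined once a non-principal initial tangent line is prescribed at one of its points. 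The entire content of the theorem is that these two initial lines can be chosen so that the resulting transforms have one and the same image surface.

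Second, I would construct the candidate locus $M_3$ pointwise. Choose a first-order frame field $\{x_0,e_1,e_2,e_3\}$ of $M_0$ adapted to the first transform, so that $x_1 = x_0 + r_1 e_1$ and the B\"{a}cklund frame relations hold with $(r,\tau)=(r_1,\tau_1)$; let $\bar e_1 = \cos\phi\,e_1 + \sin\phi\,e_2$ be the tangent direction of the second transform, so that $x_2 = x_0 + r_2\bar e_1$, where $\phi=\phi(u,v)$ is the angle between the two transformation directions. The four points $P_0,P_1,P_3,P_2$ should form a planar quadrilateral with opposite sides $r_1$ and $r_2$ -- the Bianchi quadrilateral -- which pins $x_3$ as one of the two points of the plane $x_0 + \mathrm{span}(e_1,\bar e_1)$ lying at distance $r_2$ from $P_1$ and at distance $r_1$ from $P_2$; solving these conditions gives an explicit $x_3 = x_0 + r_1 f_1 + r_2 f_2$ with $f_1,f_2$ unit vectors in that plane expressible through $\phi$. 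It then has to be checked that $x_3 - x_1$ is tangent to $M_1$ at $P_1$ and makes angle $\tau_2$ with the unit normals of $M_1$ at $P_1$ and of $M_3$ at $P_3$, and symmetrically under $1\leftrightarrow 2$; this also selects the correct one of the two candidate points. Granting this, the uniqueness in Proposition 3.2 forces the $(r_2,\tau_2)$-transform of $M_1$ and the $(r_1,\tau_1)$-transform of $M_2$ each to coincide with the locus of $P_3$, and Proposition 3.1 identifies that locus as an immersed surface of constant curvature $-\kappa$. This is the desired $M_3$.

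The verification demanded in the second step is the crux, and I expect it to be the main obstacle. In frame terms it asks that the $1$-form $\omega_2' + r_2\,\omega_{12}' - r_2\cot\tau_2\,\omega_{13}'$ associated, as in the proof of Proposition 3.1, with the $(r_2,\tau_2)$-transform of $M_1$ pull back to zero along the frame field over $M_0$ produced by the construction of $x_3$, together with its mirror image for $M_2$; here $\omega_i',\omega_{ij}'$ are the Maurer--Cartan forms of $M_1$, first re-expressed through the first B\"{a}cklund transform in terms of those of $M_0$. Expanding with the structure equations (2.5), $\omega_3=0$, and $\omega_{13}\wedge\omega_{23} = K\,\omega_1\wedge\omega_2$ with $K=-\kappa$, this collapses to a family of purely trigonometric identities in $r_1,r_2,\tau_1,\tau_2,\phi$ -- an instance of the nonlinear superposition principle alluded to in the Introduction. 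Equivalently, and more transparently, I would pass to the asymptotic-line picture of Section 2: represent $M_0$ by a solution $\psi_0$ of the sine--Gordon equation (2.10), so that the analytic B\"{a}cklund relations recalled above produce $\psi_1$ and $\psi_2$ with parameters $\lambda_1$ and $\lambda_2$ (at fixed $K$ the pair $(r_i,\tau_i)$ is equivalent to a single parameter $\lambda_i$); then it suffices to define $\psi_3$ by the algebraic relation $\tan\frac{\psi_3-\psi_0}{4} = \frac{\lambda_1+\lambda_2}{\lambda_1-\lambda_2}\tan\frac{\psi_1-\psi_2}{4}$ and to verify, by differentiating this identity in $u$ and $v$ and substituting the B\"{a}cklund equations obeyed by $\psi_1$ and $\psi_2$, that the pairs $(\psi_1,\psi_3)$ and $(\psi_2,\psi_3)$ each satisfy the B\"{a}cklund system with the parameters interchanged. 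Translating back through Propositions 3.1 and 3.2 then yields the common B\"{a}cklund transform $M_3$ and completes the argument.
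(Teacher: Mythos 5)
The first thing to note is that the paper does not actually prove Proposition 3.3: it states the permutability theorem as a classical fact and records only its analytic form, the superposition formula (3.7). So there is no in-paper argument to compare yours against, and your attempt must stand on its own. Judged that way, it is a plan rather than a proof: you explicitly defer the one step you yourself identify as the crux, namely the verification that the constructed fourth surface is simultaneously an $(r_2,\tau_2)$-transform of $M_1$ and an $(r_1,\tau_1)$-transform of $M_2$. Your second route --- define $\psi_3$ by (3.7), differentiate, and substitute the B\"{a}cklund system (3.6) for $\psi_1$ and $\psi_2$ --- is the standard and correct way to close that gap, and your opening observation that $\sin^2\tau_1/r_1^2=\sin^2\tau_2/r_2^2$ is right; but as written the decisive computation is a promissory note.

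More seriously, your geometric construction of $x_3$ is wrong. You place $P_3$ in the plane $x_0+\mathrm{span}(e_1,\bar e_1)$, i.e.\ in the tangent plane of $M_0$ at $P_0$, asserting that the Bianchi quadrilateral $P_0P_1P_3P_2$ is planar. It is not, in general. The actual constraints are that $P_3$ lie in the affine tangent plane of $M_1$ at $P_1$ and in that of $M_2$ at $P_2$; each of these planes contains $P_0$ (because $P_0P_1$ is tangent to $M_1$ at $P_1$, and likewise for $M_2$), so their intersection is a line through $P_0$ --- but by (3.1) these planes are $T_{P_0}M_0$ rotated about $e_1$ by $\tau_1$ and about $\bar e_1$ by $\tau_2$, and a direct computation (take $\phi=\pi/2$: the intersection direction is $(-\sin\tau_1\cos\tau_2,\ \cos\tau_1\sin\tau_2,\ \sin\tau_1\sin\tau_2)$ in the frame $(e_1,e_2,e_3)$) shows it has a nonzero $e_3$-component whenever $\sin\tau_1\sin\tau_2\neq 0$. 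Hence $P_3\notin T_{P_0}M_0$, your ansatz $x_3=x_0+r_1f_1+r_2f_2$ with $f_1,f_2$ tangent to $M_0$ cannot meet the tangency requirements, and the distance and angle equations you then propose to solve are generically inconsistent. If you want the geometric route, the fourth point must be sought on the line $T_{P_1}M_1\cap T_{P_2}M_2$ through $P_0$; otherwise, carry out the sine--Gordon verification in full --- that alone suffices and is exactly what underlies the formula (3.7) that the paper records in place of a proof.
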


This theorem has the following simple analytic form
\begin{equation}
\tan(\frac{\psi_3 - \psi_0}{4}) = \frac{\lambda_1 + \lambda_2}{\lambda_1 - \lambda_2}
\tan (\frac{\psi_1 - \psi_2}{4})
\end{equation}
which does not involve differentiation.  The constants $\lambda_1$ and $\lambda_2$ are
values of the parameter $\lambda$ used in equations (3.6) to generate $\psi_1$ and
$\psi_2$ from $\psi_0$ respectively.  Equation (3.7) is called the \emph{nonlinear
superposition formula} for sine-Gordon equation.  It can be used to generate the so
called N-soliton solutions.

 Some other examples of geometric transformations are given below.
\begin{example}

Transformation $W$.  If surfaces $M$ and $M^\ast$ in 3-space have a one to one
correspondence between their points $P$ and $P^\ast$ such that:  (i) $PP^\ast$ is
tangent to $M$ and $M^\ast$ at $P$ and $P^\ast$ respectively, (ii) The asymptotic nets
of $M$ and $M^\ast$ are in correspondence.  Then we say that $M$ and $M^\ast$ are
related by a transformation $W$.  The two parameter family of lines $PP^\ast$ is
called a $W$ congruence of lines.

The above definition is projective in nature and therefore applies to Euclidean and
affine surfaces as well.  The classical B\"{a}cklund transformation given earlier is a
special case of transformation $W$.  Permutability theorem similar to proposition 3.3
for such transformations was also discovered by Bianchi.
\end{example}
\begin{example}
Chern-Terng's transformation.  Chern-Terng [9] studied a pair of affine surfaces
related by a transformation $W$ which satisfies a further condition:  the affine
normals at the corresponding points of the two surfaces are always parallel to each
other.  They showed that such transformations exist if and only if the given surfaces
are affine minimal surfaces.
\end{example}
\begin{example}
A line congruence is a two parameter family of lines, namely a 2-submanifolds of a
certain Grassmannian.  Many examples of transformations of line congruences can be
found in Finikov [13].
\end{example}

\section{General concepts of geometric transformations}

It seems that all the known examples of geometric transformations are among submanifolds in homogeneous spaces.  We will outline Cartan's moving frame method in this general context (cf. Cartan [2], Chern [5]) as the bases of the present approach.\\
\indent \textbf{1.  Cartan's general concept of moving frames.}  Consider space $E$
transformed transitively by a Lie group $G$ on the \emph{right}.  Let $C$ be a
geometric configuration in $E$ (such as a finite collection of points).  As $G$
transforms the space $E, C$ is transformed $C$ accordingly by $C \mapsto Cg$.  It is
easy to find a $C$ without self symmetry under $G$, which means $C g_1 \neq C g_2$
when $g_1 \neq g_2$.  Then the space $\mathcal{F} = \{F|F = Cg$ for $g \in G\}$ serves
as a complete system of reference frames for the space $E$.  The total frame space
$\mathcal{F}$ is a group space transformed by $G$ on both left and right side by the
formulas
\begin{equation}
(Cg)h = Cgh,\quad h(Cg) =Chg.
\end{equation}
The right transformation reflects the original transformation of $G$ on $E$.  But the
left transformation on $\mathcal{F}$ is a more subtle kinds of symmetry of $E$.  Its
effect on submanifold geometry seems to be one of the key to geometric transformation
theory.

We can choose a point $x_0 \in E$ as the origin of $C$.  Then each frame $F=Cg$ has an
origin $x_0g$.  It can be easily verified that two frames $F$ and $F^\prime$ have the
same origin if and only if $F^\prime = h_0F$ for an element $h_0$ in the isotropy
group $H_0 \subset G$ at $x_0$.

For a $p$-dimensional submanifold $M \subset E$, all frames whose origin lies on $M$
form the space $\mathcal{F}_0 \subset \mathcal{F}$ of zeroth order frames of $M$.  By
fixing a particular relative position between a zeroth order frame at $x \in M$ and
the tangent plane of $M$ at $x$, we can define the subspace $\mathcal{F}_1 \subset
\mathcal{F}_0$ of first order frames of $M$.  The second order contact of $M$ can be
used to define the second order frames.  In fact, for any integer $s \geq 0$, one can
define the space $\mathcal{F}_s \subset \mathcal{F}$ of $s$-th order frames of $M$.
Furthermore there is a subgroup $H_s$ of $G$ such that in generic cases, any two
$s$-th order frames at the same point $x \in M$ differ by a left multiplication of an
element in $H_s$.  Therefore a change of frame is given by $F_s \longmapsto h_s F_s$
with $h_s \in H_s$.

In applications, the higher order frames are constructed inductively by the calculus
of Maurer-Cartan forms.  For simplicity we realize $G$ as a matrix groups.  Then all
group elements $T \in G$ and frames $F \in \mathcal{F}$ are matrices.  The total
Maurer-Cartan form $\omega$ is defined by the matrix equation $dF = \omega F$.  All
the entries of $\omega$ are right invariant forms on $\mathcal{F}$.  We can choose a
complete linear independent set of these forms and call each of them a Maurer-Cartan
form.  The total number of them is equal to the dimension of the group $G$.  Further
differentiation and $d^2F =0$ leads to the structure equation $d\omega = \omega \wedge
\omega$ of the group $G$.

It turns out that the $s$-th order frame space $\mathcal{F}_s$ is defined by a set of
equations among the Maurer-Cartan forms which we write simply as $L_s (\omega)=0$.  We
thus have the structure equations
\begin{equation}
\begin{cases}
d \omega = \omega \wedge \omega,\\
L_s (\omega) =0
\end{cases}
\end{equation}
for the $s$-th order frames of $p$-dimensional submanifold in $E$.  Equations (4.2) can be viewed as a differential system on $\mathcal{F}$ with the following property:  for any $p$-dimensional submanifold $M \subset E$, the set of its $s$-th order frames as a submanifold of $\mathcal{F}$ is an integral manifold of (4.2).  Conversely, any integral manifold of (4.2) satisfying appropriate independence condition is the space of $s$-th order frames of some $p$-dimensional submanifold $N \subset E$.  \\
\indent \textbf{2.  Geometric transformations.}  Motivated by concrete examples of
geometric transformations such those given in the previous section, we give
\begin{definition}
A geometric transformation (of order $s$) between two submanifolds $M$ and $M^\ast$ in
$E$ is a one-to-one correspondence between them which satisfies a system of
differential equations (with order less or equal to $s$) invariant under the
transformation of $G$.
\end{definition}

From moving frame point of view, we state the following\\
\\
\textbf{Conjecture}  \emph{The system of differential equations in definition 4.1 is
equivalent to a system of algebraic equations among some algebraic expressions of the
Maurer-Cartan forms of the $s$-th order frames $F_s$ and $F^\ast_s$ of submanifolds
$M$ and $M^\ast$.  Furthermore, the system is invariant under the changes of frames
$F_s \mapsto h_s F_s$ and $F^\ast_s \mapsto h^\ast_s F^\ast_s$.}

Note that above we use the word algebraic in a broad sense, without pursuing a rigorous approach.

An interesting special case of definition 4.1 is
\begin{definition}
A $s$-th order geometric transformation of contact type between two submanifolds $M$
and $M^\ast$ in $E$ is a one to one correspondence between them such that a moving
frames relation of the form $F^\ast_s = T F_s$ holds, where $T$ belongs to a subset $W
\subset G$ satisfying $H_s W H_s = W$.
\end{definition}

With some smoothness assumption for $W$, we can differentiate the equation $F^\ast_s =
T F_s$ and get $\omega^\ast = T \omega T^{-1} + dTT^{-1}$.  Together with the
structure equations of $M$ and $M^\ast$, we have
\begin{equation}
\begin{cases}
d \omega = \omega \wedge \omega, \quad L_s (\omega) =0,\\
d \omega^\ast = \omega^\ast \wedge \omega^\ast, \quad L_s (\omega^\ast)=0,\\
\omega = T^{-1} \omega^\ast T - T^{-1} dT,\\
\omega^\ast = T \omega T^{-1} + dTT^{-1}.
\end{cases}
\end{equation}
Remark that $F^\ast_s = TF_s$ is invariant under the change of frames, and therefore the
System (4.3) may be called {\it the structure equations of the geometric transformation} $F^\ast_s = TF_s$.  
Note that the last two equations of (4.3) are algebraically equivalent, but we write both of them out only
for the sake of symmetry.

As a consequence, we state and prove the following
\begin{proposition}
System (4.3) is equivalent to
\begin{equation}
\begin{cases}
d \omega = \omega \wedge \omega, \quad L_s (\omega) =0,\\
L_s (T \omega T^{-1} + d TT^{-1}) =0,
\end{cases}
\end{equation}
where $T \in W$.  In other words we have completely eliminated $\omega^\ast$ from the
system (4.3).
\end{proposition}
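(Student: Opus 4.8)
\emph{Proof plan.} The statement is an equivalence of two differential systems in the unknowns occurring in each, so the plan is to exhibit a bijection between their solution sets: a triple $(\omega,\omega^\ast,T)$ solving (4.3) should correspond to exactly the pair $(\omega,T)$ solving (4.4), with $\omega^\ast$ recovered from the gauge formula $\omega^\ast=T\omega T^{-1}+dTT^{-1}$. No immersion or independence hypothesis on $\omega$ is needed, so the argument should be purely formal/algebraic once the smoothness of $W$ is used to make $dT$ (hence $\omega^\ast$) well defined.

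One direction requires no computation. If $(\omega,\omega^\ast,T)$ satisfies (4.3), then its first line already gives the first line of (4.4); and substituting the last equation of (4.3), $\omega^\ast=T\omega T^{-1}+dTT^{-1}$, into $L_s(\omega^\ast)=0$ yields $L_s\bigl(T\omega T^{-1}+dTT^{-1}\bigr)=0$, which is the second line of (4.4). Thus every solution of (4.3) restricts to a solution of (4.4).

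For the converse I would begin from a pair $(\omega,T)$ satisfying (4.4) and \emph{define} $\omega^\ast:=T\omega T^{-1}+dTT^{-1}$. With this choice the third and fourth equations of (4.3) hold identically — they are the two algebraically equivalent rewritings noted in the text — while $L_s(\omega^\ast)=0$ is precisely the second line of (4.4). The only remaining equation to be checked is the Maurer--Cartan equation $d\omega^\ast=\omega^\ast\wedge\omega^\ast$, and the key point is that this must follow purely algebraically from $d\omega=\omega\wedge\omega$ together with the definition of $\omega^\ast$.

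To verify it I would set $\theta:=dTT^{-1}$ and $\Omega:=T\omega T^{-1}$, so $\omega^\ast=\theta+\Omega$, and differentiate term by term, using $d^2T=0$, the Leibniz rule for matrix-valued forms, $d(T^{-1})=-T^{-1}(dT)T^{-1}$, and $TT^{-1}=I=T^{-1}T$. This gives $d\theta=\theta\wedge\theta$ and $d\Omega=\theta\wedge\Omega+\Omega\wedge\theta+T(d\omega)T^{-1}=\theta\wedge\Omega+\Omega\wedge\theta+\Omega\wedge\Omega$ after inserting $d\omega=\omega\wedge\omega$, whence
\[
d\omega^\ast=d\theta+d\Omega=(\theta+\Omega)\wedge(\theta+\Omega)=\omega^\ast\wedge\omega^\ast .
\]
Conceptually this is just the standard invariance of flatness of $\omega$ (equivalently, of local solvability of $dF=\omega F$) under the change of frame $F\mapsto TF$, but carrying it out directly keeps the proof self-contained. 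The main obstacle is exactly the bookkeeping in this computation: tracking which factors are $0$-forms (commuting past $\wedge$ only entrywise, as scalars) versus $1$-forms, and getting the Leibniz signs right; beyond that the two implications simply assemble into the claimed elimination of $\omega^\ast$ from (4.3).
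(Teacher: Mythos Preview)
Your proposal is correct and follows essentially the same approach as the paper: define $\omega^\ast := T\omega T^{-1}+dTT^{-1}$ and verify $d\omega^\ast=\omega^\ast\wedge\omega^\ast$ directly, then read off $L_s(\omega^\ast)=0$ from the second line of (4.4). The paper's own proof is much terser---it simply asserts that the Maurer--Cartan equation for $\omega^\ast$ can be checked ``by direct computation'' and omits the forward direction entirely---whereas you carry the gauge computation through in detail and spell out both implications, but the logical structure is identical.
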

\begin{proof}
We can define $\omega^\ast = T \omega T^{-1} + dTT^{-1}$ and verify $d\omega^\ast =
\omega^\ast \wedge \omega^\ast$ by direct computation.  The last equation of (4.4)
gives $L_s (\omega^\ast)=0$.  We thus all the equations of (4.3).
\end{proof}
The construction of geometric transformations $F^\ast = TF$ is essentially reduced to
the solution of differential system (4.3) or (4.4).  For a given $M$, the first two
equations of (4.4) are automatically satisfied.  The solvability of the third equation
$L_s(T \omega T^{-1} + d TT^{-1}) = 0$ gives the conditions for $M$ to permit
geometric transformation of the given type.  These conditions may further be
translated into conditions on the differential invariants of $M$.  This is a general
formulation of the principle underlying the proof of proposition 3.1 and proposition
3.2 as outlined in \S 3, which will also be the bases of \S 5.

The freedom of frame change $F_s \longmapsto h_s F_s$ and $F^\ast_s \longmapsto
h^\ast_s F^\ast_s$ gives $T \longmapsto h^\ast_s T h^{-1}_s$ which can be used to
reduce the set $W$.  If this transformation is transitive on $W$, we can reduce $T$ to
a constant $T_0$ and get $F^\ast_s = T_0 F_s$.  One such case is the classical
B\"{a}cklund transformation.

The next section is a complete analysis of first order transformations with constant $T$ for surfaces in $E^3$.

\section{Transformations of linear Weingarten Surfaces}

In the spirit of Chern-Terng [9], this section studies first order geometric
transformations of contact type between two surfaces in $E^3$ with constant $T$.  It
leads naturally to the class of surfaces satisfying a relation of the form $aK + 2bH
+c =0$ with constant coefficients.  They are called Linear Weingarten surfaces by some
authors (cf.[1]).

Let $F = \{x, e_1, e_2, e_3\}$ and $F^\ast = \{x^\ast, e^\ast_1, e^\ast_2, e^\ast_3\}$
denote the first order frames of surfaces $M$ and $M^\ast$ in $E^3$ respectively.  A
transformation between the two surfaces defined by $T$ is a one to one correspondence
between them given by $x = x(u,v)$ and $x^\ast = x^\ast (u,v)$, together with two
frame fields $F = F(u,v)$ and $F^\ast = F^\ast (u,v)$ such that $F^\ast (u,v) = TF
(u,v)$ holds for every $(u,v)$.  This can be written explicitly as

\begin{equation}
\begin{cases}
x^\ast = x + b_1 e_1 + b_2 e_2 + b_3 e_3,\\
e^\ast_1 = a_{11} e_1 + a_{12} e_2 + a_{13} e_3,\\
e^\ast_2 = a_{21} e_1 + a_{22} e_2 + a_{23} e_3,\\
e^\ast_3 = a_{31} e_1 + a_{32} e_2 + a_{33} e_3,\\
\end{cases}
\end{equation}
with constant coefficients $b_i$ and constant orthogonal matrix $(a_{ij})$.  We would
like to know what can be said about $M$ and $M^\ast$ from (5.1).  By proposition 4.1,
the existence of transformation (5.1) is equivalent to the solvability of the system
\begin{equation}
\begin{cases}
d \omega = \omega \wedge \omega, \quad L_1 (\omega)=0,\\
L_1 (\omega^\ast) = L_1 (T \omega T^{-1}) =0.\\
\end{cases}
\end{equation}
For given $M$ the first two equations of the above system are automatically satisfied.
So we need only to analyze the solvability of the third equation.  In our case it is
clear that $L_1 (\omega^\ast) = \omega^\ast_3 = dx^\ast \cdot e^\ast_3$.  From (5.1)
we get
\begin{equation}
\omega^\ast_3 = a_{31} \omega_1 + a_{32} \omega_2 + \vline
\begin{aligned}
&b_1 \, &b_2\\
&a_{31} \, &a_{32}
\end{aligned}
\vline\, \omega_{12} + \vline
\begin{aligned}
& b_1 \, & b_3\\
&a_{31} \, &a_{33}
\end{aligned}
\vline\, \omega_{13} + \vline
\begin{aligned}
& b_2 \, & b_3\\
& a_{32} \, &a_{33}
\end{aligned}
\vline\, \omega_{23}.
\end{equation}
We therefore need to study the solvability of
\begin{equation}
p_1 \omega_1 + p_2 \omega_2 + p_{12} \omega_{12} + p_{13} \omega_{13} + p_{23}
\omega_{23} =0
\end{equation}
with
\begin{equation}
\begin{cases}
p_1 = a_{31}, \quad p_2 = a_{32},\\
p_{12} = \vline
\begin{aligned}
& b_1 & b_2\\
& a_{31} & a_{32}
\end{aligned}
\vline\,, \quad p_{13} =\, \vline
\begin{aligned}
& b_1 & b_3\\
&a_{31} & a_{33}
\end{aligned}
\vline\,, \quad p_{23} =\, \vline
\begin{aligned}
& b_2 & b_3\\
&a_{32} & a_{33}
\end{aligned}
\vline\, .
\end{cases}
\end{equation}
We further assume that the four points $x, x^\ast, x + e_3, x^\ast + e^\ast_3$ are not
in the same plane, in other words $xx^\ast \wedge e_3 \wedge e^\ast_3 \neq 0$.  From
(5.1) we have
\begin{equation}
xx^\ast \wedge e_3 \wedge e^\ast_3 = - \vline
\begin{aligned}
&b_1 & b_2\\
&a_{31} & a_{32}
\end{aligned}
\vline \,e_1 \wedge e_2 \wedge e_3 = -p_{12} e_1 \wedge e_2 \wedge e_3.
\end{equation}
We thus have $p_{12} \neq 0$.
We now study the solvability of (5.4) for a given surface $M$.  More precisely we ask
under what condition can we find a frame field $F (u,v)$ of $M$ along which (5.4)
holds.  For clarity let's define 1-form $\alpha = p_1 \omega_1 + p_2 \omega_2 + p_{12}
\omega_{12} + p_{13} \omega_{13}
 + p_{23} \omega_{23}$ on the 3-dimensional frame space $\mathcal{F}_1$.  We generally use the same letter to denote differential forms on a manifold and their restrictions to submanifolds.  Somethimes, we need to be very careful about the differences of meanings.  But we still prefer not to stress these differences by more notations.

If $\alpha =0$ holds along the frame field $F(u,v)$, then $d\alpha=0$ must also hold.
In order to analyze the consequences of these two equations, lets first consider some
algebraic identities on $\mathcal{F}_1$.  By the first 5 equations of (5.4),
\begin{equation}
d \alpha = (p_2 \omega_1 - p_1 \omega_2 + p_{23} \omega_{13} - p_{13} \omega_{23})
\wedge \omega_{12} - p_{12} \omega_{13} \wedge \omega_{23}.
\end{equation}
Since $\omega_1, \omega_2, \omega_{12}$ form a basis of 1-forms at every point $p_{12}
\neq 0, \omega_1, \omega_2, \alpha$ also form such a basis at every point.  We can
define function $h$ uniquely by $d \alpha = h \omega_1 \wedge \omega_2 \pmod{\alpha}$,
or equivalently by $d\alpha \wedge \alpha = p_{12} h \omega_1 \wedge \omega_2 \wedge
\omega_{12}$.  From (5.7) and
\begin{equation}
\begin{cases}
\omega_{13} \wedge \omega_{23} = K \omega_1 \wedge \omega_2,\\
\omega_1 \wedge \omega_{23} - \omega_2 \wedge \omega_{13} = 2 H \omega_1 \wedge
\omega_2,
\end{cases}
\end{equation}
we get
\begin{equation}
d \alpha \wedge \alpha = -[(p^2_{12} + p^2_{13} + p^2_{23}) K + 2 (p_1 p_{13} + p_2
p_{23} ) H + (p^2_1 + p^2_2)] \omega_1 \wedge \omega_2 \wedge \omega_{12}
\end{equation}
which determines $h$.  We therefore have
\begin{equation}
d \alpha = -\frac{1}{p_{12}} [(p^2_{12} + p^2_{13} + p^2_{23}) K + 2 (p_1 p_{13} + p_2
p_{23} ) H + (p^2_1 + p^2_2)] \omega_1 \wedge \omega_2 \pmod{\alpha}
\end{equation}
which is true on the whole of $\mathcal{F}_1$.  Restricting to the frame field
$F(u,v)$ along which $\alpha= d \alpha =0$, we get
\begin{equation}
(p^2_{12} + p^2_{13} + p^2_{23}) K + 2 (p_1 p_{13} + p_2 p_{23}) H + (p^2_1 +
p^2_2)=0.
\end{equation}
The left hand side of the above equation depends only on the points $x \in M$.  The
equation is therefore frame independent.  Conversely, if a surface $M$ satisfy (5.11),
then by (5.10)
\begin{equation}
d \alpha =0\, \pmod{\alpha}
\end{equation}
holds on the whole space of first order frames of $M$.  We can then solve the system
$\alpha=0$ uniquely for any initial condition by Frobenius theorem.  We can now
summarize all our conclusion as
\begin{proposition}
For a given surface $M$, the existence of a transformation (5.1) with $b_1 a_{32} -
b_2 a_{31} \neq 0$ is equivalent to the linear Weingarten relation
\begin{equation}
(p^2_{12} + p^2_{13} + p^2_{23}) K + 2 (p_1 p_{13} + p_2 p_{23}) H + (p^2_1 + p^2_2)=0
\end{equation}
where the constants $p_1, p_2, p_{12}, p_{13}, p_{13}$ are determined by coefficients
of $T$ through (5.5).  Moreover if a fixed $T$ defines at least one transformation
(5.1), then it defines a unique such transformation for a generic initial frame $F$ at
any point of $M$.  This gives a one parameter family of transformations of $M$.
\end{proposition}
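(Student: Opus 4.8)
The plan is to carry out exactly the reduction already set up in the paragraphs preceding the statement: recognize the existence of a transformation (5.1) as the solvability of the single Pfaffian equation $\alpha=0$ on the three-dimensional frame space $\mathcal{F}_1$, compute $d\alpha$ on all of $\mathcal{F}_1$, and then invoke the Frobenius theorem.

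First I would note that for a fixed surface $M$ the equations $d\omega=\omega\wedge\omega$ and $L_1(\omega)=0$ of (5.2) hold automatically along every first order frame field, so by Proposition 4.1 a transformation (5.1) exists precisely when one can choose a frame field $F(u,v)$ on $M$ along which $L_1(\omega^\ast)=\omega_3^\ast=0$. Substituting (5.1) into $\omega_3^\ast=dx^\ast\cdot e_3^\ast$ gives (5.3), i.e.\ $\omega_3^\ast=\alpha$ with $\alpha=p_1\omega_1+p_2\omega_2+p_{12}\omega_{12}+p_{13}\omega_{13}+p_{23}\omega_{23}$ and the constants $p_1,p_2,p_{12},p_{13},p_{23}$ given by (5.5). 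The hypothesis $b_1a_{32}-b_2a_{31}\neq0$ is, via (5.6), precisely $p_{12}\neq0$; since $\{\omega_1,\omega_2,\omega_{12}\}$ is a coframe on $\mathcal{F}_1$ this makes $\{\omega_1,\omega_2,\alpha\}$ a coframe too, and it also forces $\alpha$ to be nonzero on the fibers of $\mathcal{F}_1\to M$ — a fact I will use at the end.

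The computational core is the evaluation of $d\alpha$: from the structure equations (2.5) one obtains (5.7), and then substituting the curvature identities (5.8) and collecting the $2\times2$ determinants yields, on all of $\mathcal{F}_1$, the congruence (5.9)--(5.10) expressing $d\alpha$ modulo $\alpha$ as a multiple of $\omega_1\wedge\omega_2$ whose coefficient is $-\tfrac{1}{p_{12}}$ times the quadratic $(p_{12}^2+p_{13}^2+p_{23}^2)K+2(p_1p_{13}+p_2p_{23})H+(p_1^2+p_2^2)$. I expect this bookkeeping — verifying (5.7) from (2.5) and repackaging everything as a quadratic polynomial in $H$ and $K$ with constant coefficients — to be the only genuine obstacle, though it is a finite linear-algebra computation rather than anything conceptual.

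The equivalence then follows in both directions from this single formula. If some frame field realizes $\alpha=0$ on $M$, then $d\alpha=0$ along it, so (using $\omega_1\wedge\omega_2\neq0$ on the immersed $M$) the above quadratic must vanish; since the $p$'s are constants while $H,K$ are functions on $M$ alone, the quadratic is a function on $M$, and its vanishing is the frame-independent linear Weingarten relation (5.13). Conversely, if $M$ satisfies (5.13) the same formula gives $d\alpha\equiv0\pmod{\alpha}$ everywhere on $\mathcal{F}_1$, so the rank-one distribution $\{\alpha=0\}$ is completely integrable; by Frobenius a unique two-dimensional integral leaf passes through each point of $\mathcal{F}_1$, and because $\alpha$ is nonzero on the fibers each leaf is transverse to the fibers of $\mathcal{F}_1\to M$, hence locally a section, i.e.\ the graph of a frame field $F(u,v)$ on $M$ (with $\omega_1\wedge\omega_2\neq0$ on the leaf, since $\alpha=0$ there while $\{\omega_1,\omega_2,\alpha\}$ is a coframe). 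Setting $F^\ast=TF$ then yields a transformation (5.1) via Propositions 2.1 and 4.1. Finally, prescribing a first order frame $F_0$ over a point $x_0\in M$ selects the leaf through it, hence a unique transformation with that initial frame; the fiber of $\mathcal{F}_1\to M$ over $x_0$ being one-dimensional, letting $F_0$ run through it gives the claimed one-parameter family — the word ``generic'' covering the open, nonempty condition that the resulting surface $M^\ast$ be immersed, i.e.\ $\omega_1^\ast\wedge\omega_2^\ast\neq0$.
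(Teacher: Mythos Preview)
Your proposal is correct and follows essentially the same route as the paper: reduce the existence of the transformation to the Pfaffian equation $\alpha=0$ on $\mathcal{F}_1$, compute $d\alpha\pmod{\alpha}$ via the structure equations and the curvature identities to obtain the linear Weingarten quantity as obstruction, and then invoke Frobenius for the converse and the uniqueness/one-parameter claim. Your added remarks on transversality of the integral leaves to the fibers and on the meaning of ``generic'' make explicit what the paper leaves implicit, but the argument is the same; one terminological quibble is that $\{\alpha=0\}$ is a rank-\emph{two} distribution (a rank-one Pfaffian system), though your subsequent use of two-dimensional leaves shows you have this right.
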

\begin{remark}
Generic (rather than arbitrary) initial frame is used above because we want to avoid
the degenerate cases where the independence condition $\omega^\ast_1 \wedge
\omega^\ast_2 \neq 0$ does not hold.

For further analysis we let
\begin{equation}
\begin{cases}
p^2_{12} + p^2_{13} + p^2_{23} = a,\\
p_1 p_{13} + p_2 p_{23} =b,\\
p^2_1 + p^2_2 =c,
\end{cases}
\end{equation}
and write (5.13) as $aK + 2bH + c =0$.  The assumption $p_{12} = b_1 a_{32} - b_2
a_{31} \neq 0$ implies that $c = p^2_1 + p^2_2 = a^2_{31} + a^2_{32} > 0$.  So we have
$a > 0$ and $1 > c > 0$ and $b^2 - ac < 0$.  In fact we can show that $b^2 - ac =
-p^2_{12}$ with some computation.  Since $aK + 2bH + c =0$ is equivalent to $taK +
2tbH + tc =0$ for nonzero constant $t$, in some sense $a, b, c$ are only related by
$b^2 - ac < 0$.

Now let's begin with an arbitrary $M$ satisfying $aK+2bH+c =0$ with $b^2 -ac < 0$.  We
can assume both $a$ and $c$ to be positive and find a set of constants $p_1, p_2,
p_{12}, p_{13}, p_{23}$ with $p_{12} \neq 0$ such that (5.14) holds.  We like to see
if we can construct a $T$ whose coefficients satisfy (5.5).  Since we can multiply the
constants $a,b,c$ by an arbitrary common factor $s^2 \neq 0$, the equation (5.5) can
be rewritten as
\begin{equation}
\begin{cases}
(a_{31}, a_{32}) = (sp_1, sp_2),\\
(b_1, b_2, b_3) \times (a_{31}, a_{32}, a_{33})= (sp_{23}, -sp_{13}, sp_{12}).
\end{cases}
\end{equation}
Letting $a_{33} = sp_3$, the second equation above becomes
\begin{equation*}
(b_1, b_2, b_3) \times (p_1, p_2, p_3) = (p_{23}, -p_{13}, p_{12}).
\end{equation*}
This requires $(p_1,p_2, p_3) \cdot (p_{23}, -p_{13}, p_{12}) =0$ and $p_3 = -(p_1
p_{23} - p_2 p_{13})/p_{12}$.  Because $b^2 - ac < 0$, we have $p^2_1 + p^2_2 = c \neq
0$ and therefore $(p_1, p_2, p_3) \neq 0$.  This shows that the required $(b_1, b_2,
b_3)$ can be found.  Now we can use
\begin{equation*}
1 = a^2_{31} + a^2_{32} + a^2_{33} = s^2 (p^2_1 + p^2_2 + p^2_3) = s^2 (p^2_1 + p^2_2
+ (p_1 p_{23} - p_2 p_{13})^2 /p^2_{12})
\end{equation*}
to determine $s$ and therefore $a_{31}, a_{32}, a_{33}$.  We have thus solved (5.5)
and found the required $T$.  But for a set of constants $a,b,c,$ the matrix $T$ is not
unique.  The exact relation will be clarified in proposition 5.3.  Now we can restate
part of proposition 5.1 as
\end{remark}
\begin{proposition}
The condition for a surface $M$ to permit a first order transformation (5.1) with
$xx^\ast \wedge e_3 \wedge e^\ast_3 \neq 0$ (or equivalently $b_1 a_{32} - b_2 a_{31}
\neq 0)$ is that is satisfies a linear Weingarten relation
\begin{equation}
aK + 2bH + c =0
\end{equation}
with $b^2 - ac < 0$.
\end{proposition}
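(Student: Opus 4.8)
The plan is to regard Proposition 5.2 as the coordinate-free repackaging of Proposition 5.1 together with the algebra in the preceding remark, and to establish the two implications in turn. For the forward direction, suppose $M$ admits a transformation (5.1) with $b_1a_{32}-b_2a_{31}\neq 0$. By (5.6) this is exactly the condition $p_{12}\neq 0$, so Proposition 5.1 applies and yields the relation (5.13), which in the notation (5.14) reads $aK+2bH+c=0$. It then remains to extract the sign information. Since $p_1=a_{31}$ and $p_2=a_{32}$, the hypothesis $p_{12}=a_{32}b_1-a_{31}b_2\neq 0$ forces $(a_{31},a_{32})\neq(0,0)$, hence $c=p_1^2+p_2^2>0$ and $a=p_{12}^2+p_{13}^2+p_{23}^2\geq p_{12}^2>0$. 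The inequality $b^2-ac<0$ is the Lagrange-type identity indicated in the remark: expanding gives $b^2-ac=-p_{12}^2(p_1^2+p_2^2)-(p_2p_{13}-p_1p_{23})^2$, and a short computation using $p_{13}=b_1a_{33}-b_3a_{31}$ and $p_{23}=b_2a_{33}-b_3a_{32}$ shows $p_2p_{13}-p_1p_{23}=a_{33}p_{12}$, whence $b^2-ac=-p_{12}^2(a_{31}^2+a_{32}^2+a_{33}^2)=-p_{12}^2<0$, the third row of the orthogonal matrix $(a_{ij})$ being a unit vector.

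For the converse I would reverse the construction sketched in the remark. Given $M$ with $aK+2bH+c=0$ and $b^2-ac<0$, rescale $(a,b,c)$ by a common nonzero factor so that $a,c>0$, and choose real constants $p_1,p_2,p_{12},p_{13},p_{23}$ with $p_{12}\neq 0$ realizing (5.14). Such a choice is available precisely because $b^2-ac<0$: fixing $(p_1,p_2)$ with $p_1^2+p_2^2=c$, the line $p_1p_{13}+p_2p_{23}=b$ in the $(p_{13},p_{23})$-plane lies at distance $|b|/\sqrt{c}<\sqrt{a}$ from the origin, hence meets the open disk of radius $\sqrt{a}$, and picking a point of the line strictly inside that disk but off the foot of the perpendicular gives $p_{13},p_{23}$ with $p_{12}^2:=a-p_{13}^2-p_{23}^2>0$. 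Then I set $p_3=-(p_1p_{23}-p_2p_{13})/p_{12}$, note $(p_1,p_2,p_3)\neq 0$ since $p_1^2+p_2^2=c>0$, and observe $(p_1,p_2,p_3)\cdot(p_{23},-p_{13},p_{12})=0$, so there is a vector $(b_1,b_2,b_3)$ with $(b_1,b_2,b_3)\times(p_1,p_2,p_3)=(p_{23},-p_{13},p_{12})$. A scalar $s$ solving $s^2(p_1^2+p_2^2+p_3^2)=1$ makes $(sp_1,sp_2,sp_3)$ a unit vector, which I complete to an orthogonal matrix $(a_{ij})$ by Gram--Schmidt; a matching rescaling of the $b_i$ then secures (5.5), producing a constant $T$ of the shape (5.1). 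Finally, since $M$ satisfies (5.13), Proposition 5.1 gives $d\alpha\equiv 0\pmod\alpha$ on $\mathcal F_1$, so Frobenius supplies a first order frame field $F(u,v)$ of $M$ along which $\alpha=\omega_3^\ast=0$; for a generic initial frame, $F^\ast=TF$ is a first order frame field of an immersed surface $M^\ast$ by Proposition 2.1, and $x\leftrightarrow x^\ast$ is the desired transformation.

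The one genuinely delicate point is the independence condition $\omega_1^\ast\wedge\omega_2^\ast\neq 0$ needed to apply Proposition 2.1, which is exactly why Proposition 5.1 speaks of a \emph{generic} initial frame: $\omega_1^\ast$ and $\omega_2^\ast$ are linear combinations of $\omega_1,\omega_2,\omega_{12},\omega_{13},\omega_{23}$ with coefficients built from the $b_i$ and $a_{ij}$, so on the integral leaf $\alpha=0$ --- where $\omega_{12}$ can be eliminated and $\omega_{13},\omega_{23}$ expressed through (2.6) --- one finds $\omega_1^\ast\wedge\omega_2^\ast=\varphi(x)\,\omega_1\wedge\omega_2$ for a function $\varphi$ whose zero set is a proper subvariety of $M$; a generic leaf avoids it, and this is the only place where ``generic'' cannot be weakened to ``arbitrary''. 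Apart from it the argument is bookkeeping: cite Proposition 5.1 for the relation and for Frobenius integrability, record the two short sign computations above, and run the explicit reconstruction of $T$ from $(a,b,c)$ --- with the hypothesis $b^2-ac<0$ used exactly at the step where $p_{12}\neq 0$ must be arranged.
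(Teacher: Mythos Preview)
Your proposal is correct and follows essentially the same route as the paper: Proposition~5.2 is, as you say, the coordinate-free repackaging of Proposition~5.1 together with the algebra in the preceding remark, and both directions of your argument track the paper's outline closely. You actually supply details the paper only gestures at---the Lagrange-type computation giving $b^2-ac=-p_{12}^2$ (the paper merely says ``we can show\ldots with some computation''), and the explicit reason why $b^2-ac<0$ permits a choice of $p_{13},p_{23}$ with $p_{12}\neq 0$---so your write-up is in fact more complete than the paper's own treatment.
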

\begin{remark}
The condition $b^2 - ac< 0$ is caused by our restriction to real geometry.  As we
progress further into the algebraic aspects of the theory, it becomes natural to work
in the complex field.  More precisely, we can complexify the space $E^3$ and consider
its analytic surface theory.  With this extension, surfaces with $b^2 - ac>0$ permit
complex transformations.

We would like to give a more geometric description of the relation (5.1).  We are only
concerned with the relative position between the two tangent planes of $M$ and
$M^\ast$ at $x$ and $x^\ast$, which is not affected by rotations of the frames $F$ and
$F^\ast$ around $e_3$ and $e^\ast_3$.  This relative position can be described by the
following geometric quantities:
\begin{equation}
\begin{cases}
(i) \text{the distance}\; r = ||xx^\ast||,\\
(ii) \text{the angle}\; \tau\; \text{between}\; e_3\; \text{and}\; e^\ast_3,\\
(iii) \text{the angle}\; \theta\; \text{between the cord}\; xx^\ast \; \text{and}\; e_1 e_2- \text{plane},\\
(iv) \text{the angle}\; \theta^\ast \; \text{between the cord}\; xx^\ast \text{and} \;
e^\ast_1 e^\ast_2-\text{plane}.
\end{cases}
\end{equation}
Their relations to the coefficients of $T$ can be obtained as follow,
\begin{equation}
\begin{cases}
r^2 = ||xx^\ast||^2 = b^2_1 + b^2_2 + b^2_3,\\
\cos \tau = e_3 \cdot e^\ast_3 = a_{33},\\
\sin \theta = xx^\ast \cdot e_3/||xx^\ast|| = b_3/r,\\
-\sin \theta^\ast = xx^\ast \cdot e^\ast_3/||xx^\ast|| = (a_{31} b_1 + a_{32} b_2 +
a_{33} b_3)/r.
\end{cases}
\end{equation}
The four constants $r^2, \cos \tau, \sin \theta, \sin \theta^\ast$ represent an
equivalent class of $T$ under the relation $T \sim h_1 T h^\ast_1$, where $h_1$ and
$h^\ast_1$ belongs to the subgroup $H_1$ consisting of transformations of the form
\begin{equation}
\begin{cases}
\overline{x} = x,\\
\overline{e}_1 = \cos \phi e_1 + \sin \phi e_2,\\
\overline{e} _2 = -\sin \phi e_1 + \cos \phi e_2,\\
\overline{e} _3 = e_3.
\end{cases}
\end{equation}
By equation (5.5), the coefficients of (5.11) or (5.16) can be expressed in terms of
the quantities in (5.17) as follow,
\begin{equation*}
a = p^2_{12} + p^2_{13} + p^2_{23} = ||xx^\ast \times e^\ast_3||^2 = r^2 \cos^2
\theta^\ast,
\end{equation*}
\begin{equation*}
c = p^2_1 + p^2_2 = a^2_{31} + a^2_{32} =1 - a^2_{33} =1 - \cos^2 \tau =\sin^2 \tau,
\end{equation*}
and
\begin{align*}
b = &p_1 p_{13} + p_2 p_{23} = a_{31} \vline
\begin{aligned}
&b_1 & b_3\\
&a_{31} & a_{33}
\end{aligned}
\vline + a_{32} \vline
\begin{aligned}
&b_2 & b_3\\
&a_{32} & a_{33}
\end{aligned}
\vline\\
& = (a_{31} b_1 + a_{32} b_2) a_{33} - (a^2_{31} + a^2_{32}) b_3\\
& = (-r \sin \theta^\ast - \cos \tau (r \sin \theta)) \cos \tau - \sin^2 \tau (r \sin \theta)\\
& = -r (\sin \theta^\ast \cos \tau + \sin \theta).
\end{align*}
Equation (5.16) can now be written as
\begin{equation}
r^2 \cos^2 \theta^\ast K - 2r(\cos \tau \sin \theta^\ast + \sin \theta) H + \sin^2
\tau =0.
\end{equation}
Similarly we have
\begin{align*}
p^2_{12} & = (b_1 a_{32} - b_2 a_{31})^2\\
& = (b^2_1 + b^2_2)(a^2_{31} + a^2_{32}) - (b_1 a_{31} + b_2 a_{32})^2\\
& = r^2 \cos^2 \theta \sin^2 \tau + r^2 (\sin \theta^\ast + \cos \tau \sin \theta)^2\\
& = r^2(\sin(\tau + \theta) + \sin \theta^\ast)(\sin (\tau - \theta) - \sin \theta^\ast)\\
& = 4 r^2 \sin \frac{\tau + \theta + \theta^\ast}{2} \cos \frac{\tau + \theta - \theta^\ast}{2} \sin \frac{\tau - \theta - \theta^\ast}{2} \cos \frac{\tau - \theta + \theta^\ast}{2}\\
& = r^2 (\cos (\theta + \theta^\ast) - \cos \tau) (\cos (\theta - \theta^\ast) + \cos
\tau).
\end{align*}
The condition $p_{12} \neq 0$ now reads $r \neq 0$ and $\cos (\theta \pm \theta^\ast)
\neq \pm \cos \tau$.  Since $M$ and $M^\ast$ are in symmetric positions, we have the
following
\end{remark}
\begin{proposition}
If surfaces $M$ and $M^\ast$ are related by a first order transformation geometrically
given by (5.17) with $xx^\ast \wedge e_3 \wedge e^\ast_3 \neq 0$ (or equivalently $r
\neq 0$ and $\cos (\theta \pm \theta^\ast) \neq \pm \cos \tau)$, then they are both
linear Weingarten surfaces satisfying
\begin{equation}
\begin{cases}
r^2 \cos^2 \theta^\ast K - 2r (\cos \tau \sin \theta^\ast + \sin \theta) H + \sin^2 \tau =0,\\
r^2 \cos^2 \theta K^\ast - 2r (\cos \tau \sin \theta + \sin \theta^\ast ) H^\ast +
\sin^2 \tau =0.
\end{cases}
\end{equation}
\end{proposition}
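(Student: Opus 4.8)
The plan is to read the first relation of (5.21) off Proposition 5.1 after rewriting its conclusion in the geometric variables (5.17), and to obtain the second relation from the symmetry of the defining equation $F^\ast = TF$ under inversion.

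First I would apply Proposition 5.1 to $M$. Since $M$ and $M^\ast$ are joined by a transformation of the form (5.1), and since by (5.5)--(5.6) the hypothesis $b_1 a_{32} - b_2 a_{31} \neq 0$ coincides with $p_{12} \neq 0$ and with $xx^\ast \wedge e_3 \wedge e_3^\ast \neq 0$, the surface $M$ satisfies the linear Weingarten relation $aK + 2bH + c = 0$ with $a, b, c$ as in (5.14). I would then substitute the geometric expressions (5.18) for the coefficients of $T$ into (5.5) and carry out the short determinant computations already displayed in the Remark preceding this proposition, namely
\begin{equation*}
a = \|xx^\ast \times e_3^\ast\|^2 = r^2\cos^2\theta^\ast,\qquad c = a_{31}^2 + a_{32}^2 = 1 - \cos^2\tau = \sin^2\tau,
\end{equation*}
together with $b = (a_{31}b_1 + a_{32}b_2)a_{33} - (a_{31}^2 + a_{32}^2)b_3 = -r(\cos\tau\sin\theta^\ast + \sin\theta)$. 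These identities turn $aK + 2bH + c = 0$ into exactly equation (5.20), which is the first line of (5.21). Along the way I would also record the factorization $p_{12}^2 = r^2(\cos(\theta+\theta^\ast) - \cos\tau)(\cos(\theta-\theta^\ast) + \cos\tau)$, which shows that $p_{12} \neq 0$ is equivalent to $r \neq 0$ together with $\cos(\theta \pm \theta^\ast) \neq \pm\cos\tau$, so that the two forms of the hypothesis in the statement coincide.

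For the second line I would invert the frame relation. From $F^\ast(u,v) = TF(u,v)$ we obtain $F(u,v) = T^{-1}F^\ast(u,v)$, and because $(a_{ij})$ is a constant orthogonal matrix, $T^{-1}$ is again of the shape (5.1): its rotational part is the transpose $(a_{ij})^{\top}$ and its translation vector is the constant vector $-(a_{ij})(b_1,b_2,b_3)^{\top}$. Hence the same correspondence exhibits $M$ as a first order transformation of $M^\ast$, and the hypothesis $xx^\ast \wedge e_3 \wedge e_3^\ast \neq 0$, being symmetric in the two surfaces, persists. It then remains to see how the quantities (5.17) change under $T \mapsto T^{-1}$: the chord $xx^\ast$ spans the same line and the two unit normals are unchanged, so $r$ and $\tau$ are preserved; the angle between the chord and the tangent plane of $M^\ast$, which for the inverse transformation plays the role of the base-surface angle $\theta$, is the old $\theta^\ast$, and the angle between the chord and the tangent plane of $M$, now the partner angle $\theta^\ast$, is the old $\theta$. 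A quick check against the sign conventions in (5.18) confirms that $\sin\theta$ and $\sin\theta^\ast$ are interchanged under inversion. Feeding the tuple $(r,\tau,\theta^\ast,\theta)$ into the first part of the argument, now applied to $M^\ast$, then produces the second line of (5.21).

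The one genuinely delicate point is this last piece of bookkeeping: checking that under inversion exactly the pair $(\theta,\theta^\ast)$ is transposed while $r$ and $\tau$ remain fixed, and that the residual ambiguity $T \sim h_1 T h_1^\ast$ of (5.19) is compatible with this, so that the four invariants $r^2, \cos\tau, \sin\theta, \sin\theta^\ast$ really can be used symmetrically for $M$ and $M^\ast$. Everything else — the determinant identities for $a, b, c, p_{12}$ and the passage from $\omega_3^\ast = 0$ to the Weingarten relation — is routine and was in essence already carried out in \S 5 via Propositions 5.1 and 5.2 and the surrounding Remarks.
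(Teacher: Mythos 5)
Your proposal is correct and follows essentially the same route as the paper: the paper derives the first relation of (5.21) by substituting the geometric expressions for $a$, $b$, $c$ from (5.18) into the Weingarten relation of Proposition 5.1 (obtaining (5.20)), and then obtains the second relation by the one-line observation that ``$M$ and $M^\ast$ are in symmetric positions.'' Your explicit verification that inverting $F^\ast = TF$ preserves $r$ and $\tau$ while transposing $\sin\theta$ and $\sin\theta^\ast$ simply spells out the symmetry step the paper leaves implicit.
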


Using the notation
\begin{equation}
\begin{cases}
a = r^2 \cos^2 \theta^\ast, \;b = -r(\cos \tau \sin \theta^\ast + \sin \theta), \;c = \sin^2 \tau,\\
a^\ast = r^2 \cos^2 \theta, \;b^\ast = -r(\cos \tau \sin \theta + \sin \theta^\ast),
\;c^\ast = \sin^2 \tau,
\end{cases}
\end{equation}
we can verify that $c = c^\ast$ and $ac - b^2 = a^\ast c^\ast - b^{\ast2}$.  But we
should allow $a,b,c$ and $a^\ast, b^\ast, c^\ast$ to be changeable upto arbitrary
common factors $t$ and $t^\ast$ respectively.  Therefore they only have the relation
\begin{equation}
(ac - b^2)/c^2 = (a^\ast c^\ast - b^{\ast2})/c^{\ast2}.
\end{equation}
The number $(ac-b^2)/c^2$ is invariant under the first order transformations.

To complete our analysis, we briefly look at the independence condition $\omega^\ast_1
\wedge \omega^\ast_2 \neq 0$ which insures that $M^\ast$ is an immersed surface in
$E^3$.  Similar to (5.3) we can express $\omega^\ast_1$ and $\omega^\ast_2$ in terms
of $\omega_i$ and $\omega_{ij}$.  Then by (5.4) and (2.6), we can obtain
\begin{equation}
\omega^\ast_1 \wedge \omega^\ast_2 = \frac{1}{p_{12}} Q(T, h_{11}, h_{12}, h_{22})
\omega_1 \wedge \omega_2.
\end{equation}
Here $Q$ is a polynomial which is quadratic in $h_{11}, h_{12}$ and $h_{22}$.

\section{ Linear Weingarten surfaces with $aH + 2 b K = 0$ (case $c=0$) and their (nonlinear Weingarten) transformations}

Let $F = \{x, e_1, e_2, e_3\}$ and $F^\ast = \{x^\ast, e^\ast_1, e^\ast_2, e^\ast_3\}$
denote the first order frames of surfaces $M$ and $M^\ast$ in $E^3$ respectively.  Assume the transformation between the two frames is given by rotation in plane, so that $e_3$ and $e^\ast_3$ are either identical, or opposite normals. Assume that such a transformation $F^\ast (u,v) = TF
(u,v)$ holds for every $(u,v)$ on the original surface.  If the Gauss map changes orientation, we say that the {\it tangent planes are identical up to a change in orientation}. An example of such a transformation is given by the equations:

\begin{equation}
\begin{cases}
x^\ast = x + b_1 e_1 + b_2 e_2 + b_3 e_3,\\
e^\ast_1 = a_{11} e_1 + a_{12} e_2,\\
e^\ast_2 = a_{21} e_1 + a_{22} e_2,\\
e^\ast_3 = - e_3,\\
\end{cases}
\end{equation}
with constant coefficients $b_i$ and constant orthogonal matrix $(a_{ij})$.

Inspired by the previous idea, we would like to remind the concept of Christoffel transform.

Two immersions $M$ and $M^*$ from the same open simply connected domain $D$ into Euclidean space are said to be Christoffel transforms of each other (a Christoffel pair) if they induce two conformally equivalent metrics on their (same) domain, such that, at every point, their tangent planes are parallel, and with opposite orientations.

\

We will reserve the term of {\bf dual} surface for a Christoffel transform that is made unique in a prescribed way (see definition below). Clearly, a Christoffel pair is uniquely defined up to homotheties and translations.

\

 Such a corresponding surface $M^\ast$ of $M$, if it exists, is said to be a Christoffel transform (or generalized-dual) of the first one. Note that a Christoffel transform does not exist for every surface. Actually, Christoffel pairs are characterized by the existence of isothermic parameterizations.

\

 An isothermic parameterization around each (non-umbilic) point of a surface is a conformal (isothermal) parameterization which also diagonalizes the second fundamental form.
 Another way to characterize it is: a conformal parameterization for which the Hopf differential is real-valued.

 A surface that admits an isothermic parameterization is said to be {\bf isothermic}.

 For example, any constant mean curvature surface admits an isothermic parameterization away from umbilics.
 However, note that even the trivial case of totally umbilic surfaces can be included in the general family of isothermic surfaces.

\

{\bf Question 1:} Do linear Weingarten surfaces admit isothermic parameterizations? The answer is negative in general.

Examples of isothermic surfaces include constant mean curvature surfaces, Bonnet surfaces, quadric surfaces. 
While a theorem of characterization for isothermic surfaces exists, all (linear) Weingarten surfaces have not yet been classified from the view point of isothermic parameterizations.

\

{\bf Question 2:} Is the Finkel-Wu's conjecture true? This conjecture basically states that the only functional relation of the type $G(H,K)=0$ (or, equivalently, the only relation between principal curvatures) which determines an Integrable Class of Weingarten surfaces is the {\it linear} relation.
Answer: Recently, Baran and Marvan brought some excellent arguments disproving (rephrasing) that conjecture.

\

{\bf Question 3:} Let us consider the particular case of a linear Weingarten surface $M$ for which $c=0$.
If the surface $M^\ast$ is obtained from $M$ via a first order transformation such as the one seen before, will the new surface $M^\ast$ also represent a linear Weingarten surface? If not, what is the geometric relationship between these two surfaces, and how can it it be characterized?

\begin{remark}
Propositions 5.2 and 5.3 do {\bf not} apply to this special case. Indeed, in proposition 5.3 we have excluded the case when the angle between the vectors $e_3$ and $e^\ast_3$ is either zero, or a multiple of $\pi$. Note that if we took the angle $\tau$ to be either zero, or a multiple of $\pi$, as a Limiting Case of Proposition 5.3, we could obtain a sequence of linear Weingarten surfaces $aH+2bK+c_n=0$ with $c_n$ converging to zero.
\end{remark}

 In order to provide an answer to question Q3 formulated before, let us first consider a Weingarten surface $f$ that satisfies the relation $aH+2bK=0$ with constant real coefficients $a$ and $b$, and assume that such a surface admits isothermic coordinates $(u,v)$. With respect to these coordinates, let its induced metric be $ds^2 = E (du^2 + dv^2)$ and the second fundamental form be $d \sigma^2= l du^2 + n dv^2$.

\begin{definition}
Given an isothermic immersion $f(u,v)$ into Euclidean space, with metric $ds^2 = E (du^2 + dv^2)$, there exists a specialized Christoffel transform (called {\bf dual}) of $f$, denoted $f^\ast$, such that $(f^\ast)_u={f_u}/E$ and $(f^\ast)_v=-{f_v}/E$. The metric factor of $f^\ast$ represents the inverse of the metric factor of $f$, namely $ds^2 = E^{-1} (du^2 + dv^2)$ , while its Gauss map satisfies $N^\ast= - N$ everywhere defined.
\end{definition}

The proof and additional properties are straightforward and can be also found in reference [17].

Note that the second fundamental form of the dual will be:
$$d \sigma^2= E^{-1} ( - l  du^2 + n  dv^2)$$.

\

Consequently, the principal curvatures of the dual surface will be:

$k^\ast_1 = -l = - E k_1$, and, respectively
$k^\ast_2 = n = E k_2$.

\begin{remark}
 Consider a linear Weingarten surface with non-vanishing Gaussian curvature which satisfies the equation $H = \alpha K$, where $\alpha$ is a real constant. An equivalent way to characterize this surface is $\frac{1}{k_1} + \frac{1}{k_2} = constant$, where $k_1$ and $k_2$ are the principal curvatures.
\end{remark}

Further, we state the following:

\begin{proposition}
 If an isothermic linear Weingarten surface $f$ satisfies the equation $H = \alpha K$, away from points where $K$ vanishes, then its {\it dual surface} $f^\ast$  satisfies the relation $\frac{1}{k^\ast_1} - \frac{1}{k^\ast_2} = constant$, that is,  represents a surface with constant astigmatism. For more details about this type of surface, see [10].
\end{proposition}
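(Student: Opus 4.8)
The plan is to run the whole argument inside one isothermic chart of $f$ and to read off all the data of the dual $f^\ast$ from the definition of the dual and from the characterization of $H = \alpha K$ recalled above. First I would fix an isothermic chart $(u,v)$ on $f$ --- it exists by hypothesis --- so that $ds^2 = E(du^2+dv^2)$ and $d\sigma^2 = l\,du^2 + n\,dv^2$ with vanishing mixed term, whence the principal curvatures of $f$ are $k_1 = l/E$ and $k_2 = n/E$, and therefore $2H = (l+n)/E$, $K = ln/E^2$. Since $K$ does not vanish on the region in question, $l$ and $n$ are nowhere zero there, so I may invert them; and, as recalled just before the Proposition, the hypothesis $H = \alpha K$ is equivalent to $\frac{1}{k_1} + \frac{1}{k_2} = 2\alpha$, i.e. to the identity $E\bigl(\frac{1}{l}+\frac{1}{n}\bigr) = 2\alpha$, which relates the conformal factor of $f$ to the coefficients of its second fundamental form. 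This is the only way the Weingarten hypothesis on $f$ will enter.

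Next I would apply the definition of the dual: $f^\ast$ has conformal factor $E^{-1}$ and second fundamental form $E^{-1}(-l\,du^2 + n\,dv^2)$. Two consequences follow at once. First, $(u,v)$ is again an isothermic chart, now for $f^\ast$, so $f^\ast$ is itself isothermic. Second, the principal curvatures of $f^\ast$ are $k_1^\ast = -l = -Ek_1$ and $k_2^\ast = n = Ek_2$, so its principal radii of curvature are $1/k_1^\ast = -1/l$ and $1/k_2^\ast = 1/n$, and subtracting gives
\[
\frac{1}{k_1^\ast} - \frac{1}{k_2^\ast} = -\frac{1}{l} - \frac{1}{n} = -\Bigl(\frac{1}{l}+\frac{1}{n}\Bigr).
\]
It therefore remains only to prove that the right-hand side is a constant; granting that, $f^\ast$ is a (generically non-linear) Weingarten surface of constant astigmatism and, being isothermic, is exactly of the kind studied in [10], which is the claim.

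The constancy of $\frac{1}{l}+\frac{1}{n}$ is the step I expect to be the genuine obstacle. What the hypothesis supplies is $E\bigl(\frac{1}{l}+\frac{1}{n}\bigr) = 2\alpha$, which still carries the conformal factor $E$, so one must show that for an isothermic surface obeying precisely this linear Weingarten relation the unweighted combination $\frac{1}{l}+\frac{1}{n}$ is already constant (and nonzero). I would attack this by invoking the Gauss--Codazzi system of the isothermic data $(E;l,n)$ --- in orthogonal coordinates the Codazzi equations read $l_v = E_v H$ and $n_u = E_u H$ --- together with the identity above, trying to force $\partial_u\bigl(\frac{1}{l}+\frac{1}{n}\bigr) = \partial_v\bigl(\frac{1}{l}+\frac{1}{n}\bigr) = 0$. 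That is where the real calculation lies, and it is conceivable that it requires an extra normalization of the isothermic parameter (a Christoffel pair being determined only up to homothety and translation anyway) or an additional hypothesis beyond what the Proposition states as phrased.
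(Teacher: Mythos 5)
Your setup coincides exactly with the paper's intended argument: isothermic chart, $k_1=l/E$, $k_2=n/E$, the equivalence $H=\alpha K\Leftrightarrow \frac{1}{k_1}+\frac{1}{k_2}=2\alpha$ from the preceding remark, the dual curvatures $k_1^\ast=-l=-Ek_1$, $k_2^\ast=n=Ek_2$, and hence $\frac{1}{k_1^\ast}-\frac{1}{k_2^\ast}=-\bigl(\frac{1}{l}+\frac{1}{n}\bigr)=-\frac{2\alpha}{E}$. The paper's own proof is the single sentence that the claim is ``immediate'' from the remark and the dual-curvature formulas; it says nothing about the step you single out as the genuine obstacle, namely why the conformal factor $E$ should drop out. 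So you have not missed an idea contained in the paper --- you have located precisely the point at which the paper's one-line proof is silent, and where it implicitly treats $\frac{1}{k_1^\ast}-\frac{1}{k_2^\ast}$ as if it equaled $-\bigl(\frac{1}{k_1}+\frac{1}{k_2}\bigr)$.

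Unfortunately the obstacle is real, and the Codazzi computation you propose cannot close it. If $\alpha\neq 0$ and $K\neq 0$, then constancy of $\frac{1}{l}+\frac{1}{n}=\frac{2\alpha}{E}$ forces $E$ to be constant, so the metric $E(du^2+dv^2)$ is flat and the theorema egregium gives $K=0$, contradicting the hypothesis; no normalization of the isothermic parameter helps, since rescaling only changes the dual by a homothety, which preserves (non)constancy of the astigmatism. A concrete test case: the round sphere of radius $R$ is isothermic, satisfies $H=RK$ with $K\neq 0$, and its Christoffel dual is a non-planar minimal surface (the classical pairing of minimal surfaces with their conformally parametrized Gauss maps); for a minimal surface $\frac{1}{k_1^\ast}-\frac{1}{k_2^\ast}=\frac{2}{k_1^\ast}$, which is constant only in the planar case. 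The statement does go through when $\alpha=0$: then $l+n\equiv 0$ and the dual has astigmatism identically zero, which is the one instance the paper verifies. So your closing caveat is exactly right --- for $\alpha\neq 0$ the proposition requires either an additional hypothesis or a reinterpretation of the conclusion (e.g.\ modulo the conformal factor), not merely a longer calculation.
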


\begin{proof}
The proof is immediate and it is based on the previous remark and proposition for the dual surface of an isothermic immersion.
\end{proof}

Note that the dual surface $f^\ast$ can be actually obtained from the original surface $f$ by applying a first order transformation (Euclidean motions of the corresponding moving frame).

On the other hand, a linearity relation of type $aH+2bK=0$ is {\bf not} preserved by first order transformations.

\

However, proposition 6.1 proved a beautiful duality between the relations satisfied by the principal curvatures (of the original and the dual, respectively: $\frac{1}{k_1} + \frac{1}{k_2} = constant$ versus $\frac{1}{k^\ast_1} - \frac{1}{k^\ast_2} = constant$ ).
Also note that the {\it special} case of any minimal surfaces $H=0$ and its dual sphere trivially satisfies the proposition above. The dual surface of a minimal surface is a sphere in the strict sense of the previous definition, and in particular, the sphere has a constant astigmatism.

\

Surfaces with constant astigmatism, defined by $\frac{1}{k^\ast_1} -\frac{1}{k^\ast_2} = constant$, {\bf do} represent Weingarten surfaces, in the sense that there exists a relationship between $H$ and $K$, but this relation is {\bf not} linear.

The paper [10] showed and rigorously proved that all surfaces with constant astigmatism represent involutes of pseudospherical surfaces (i.e., surfaces of constant negative Gauss curvature represent the evolutes of surfaces with constant astigmatism).

The goal of [10] was to find a meaningful relationship between the class of surfaces with constant astigmatism and the class of linear Weingarten surfaces.

The goal of the current section was to describe yet another relation between surfaces with constant astigmatism and linear Weingarten surfaces.

\

\textbf{Acknowledgement.}  The author would like to thank X. Mo for the private communication that represented the origin of this work. The academic world of mathematics would have greatly benefited from his continuous expertise.

\end{document}